\title{\LARGE Partitioning edge-coloured hypergraphs into few  monochromatic tight cycles\thanks{Published in SIAM J. Discrete Math. \textbf{34} (2020), no. 2, 1460--1471, DOI: \href{https://doi.org/10.1137/19M1269786}{10.1137/19M1269786}, \copyright~2020. This manuscript version is made available under the \href{http://creativecommons.org/licenses/by-nc-nd/4.0/}{CC-BY-NC-ND 4.0} license.}}
\author{Sebasti\'an Bustamante \and Jan Corsten \and N\'ora Frankl \and Alexey Pokrovskiy \and Jozef Skokan}
\crefname{chapter}{Chapter}{Chapters}
\crefname{section}{Section}{Sections}
\crefname{subsection}{Section}{Sections}
\crefname{subsubsection}{Section}{Sections}
\crefname{figure}{Figure}{Figures}
\crefname{table}{Table}{Tables}
\theoremstyle{definition}
\newenvironment{step}[1]
{\innerstep}
{\endinnerstep}
\crefname{step}{Step}{Steps}
\crefname{question}{Question}{Questions}
\newtheorem{definition}{Definition}[section]
\crefname{def}{Definition}{Definitions}
\crefname{ex}{Example}{Examples}
\theoremstyle{plain}
\newtheorem{thm}[definition]{Theorem}
\crefname{thm}{Theorem}{Theorems}
\newtheorem{conj}[definition]{Conjecture}
\crefname{conj}{Conjecture}{Conjectures}
\newtheorem{lemma}[definition]{Lemma}
\crefname{lemma}{Lemma}{Lemmas}
\newtheorem{cor}[definition]{Corollary}
\crefname{cor}{Corollary}{Corollaries}
\crefname{prop}{Proposition}{Propositions}
\newtheorem{obs}[definition]{Observation}
\crefname{obs}{Observation}{Observations}
\crefname{obs}{Observation}{Observations}
\crefname{claim}{Claim}{Claims}
\crefname{claimnr}{Claim}{Claims}
\newtheorem{problem}[definition]{Problem}
\crefname{problem}{Problem}{Problems}
\theoremstyle{remark}
\crefname{rmk}{Remark}{Remarks}
\newcommand{\N}{\mathbb{N}}
\newcommand{\e}{\varepsilon}
\newcommand{\cC}{\mathcal{C}}
\newcommand{\cF}{\mathcal{F}}
\newcommand{\cG}{\mathcal{G}}
\newcommand{\cH}{\mathcal{H}}
\newcommand{\cP}{\mathcal{P}}
\newcommand{\bP}{\mathbf{P}}
\newcommand{\Lk}{\:\mathrm{Lk}}
\newcommand{\den}[1]{\mathrm{d}\left(#1\right)}
\DeclareMathOperator{\dens}{d}
\newcommand{\card}[1]{\left| #1 \right|}
\newcommand{\tp}[1]{\mathrm{tp}\left( #1 \right)}
\begin{document}
\maketitle
\begin{abstract}
  Confirming a conjecture of Gy\'arf\'as, we prove that, for all natural numbers $k$ and $r$, the vertices of every $r$-edge-coloured complete $k$-uniform hypergraph can be partitioned into a bounded number (independent of the size of the hypergraph) of monochromatic tight cycles.
  We further prove that, for all natural numbers $p$ and $r$, the vertices of every $r$-edge-coloured complete graph can be partitioned into a bounded number of $p$-th powers of cycles, settling a problem of Elekes, Soukup, Soukup and Szentmikl\'ossy.
  In fact we prove a common generalisation of both theorems which further extends these results to all host hypergraphs of bounded independence number.
\end{abstract}
\thispagestyle{empty}

\section{Introduction and main results}

A conjecture of Lehel states that the vertices of any $2$-edge-coloured complete graph on $n$ vertices can be partitioned into two monochromatic cycles of different colours. Here single vertices and edges are considered cycles. This conjecture first appeared in~\cite{lehel}, where it was also proved for some special types of colourings of $K_n$.
 \L{}uczak, R\"odl and Szemer\'edi  \cite{Luczak1998} proved Lehel's conjecture for all sufficiently large $n$  using the regularity method. In \cite{Allen2008}, Allen gave an alternative proof, which gave a~better bound on $n$. Finally, Bessy and Thomass\'e \cite{Bessy2010} proved Lehel's conjecture for all integers $n\ge 1$.

For colourings with more than two colours (all colourings in this paper are edge-colourings), Erd\H os, Gy\'arf\'as and Pyber \cite{Erdos1991} proved that the vertices of every $r$-coloured complete graph can be partitioned into $O(r^2 \log r)$ monochromatic cycles and conjectured that $r$ cycles should always suffice.
Their conjecture was refuted by Pokrovskiy~\cite{Pokrovskiy2014}, who showed that, for every $r \geq 3$, there exist infinitely many $r$-coloured complete graphs which cannot be vertex-partitioned into $r$ monochromatic cycles. Pokrovskiy also proposed the following alternative version of Erd\H os, Gy\'arf\'as and Pyber conjecture, which is still widely open.

\begin{conj}[Pokrovskiy \cite{Pokrovskiy2014}]\label{conj:Pokrovskiy} In every $r$-edge-coloured complete graph, there are $r$ vertex-disjoint monochromatic cycles covering all but $c_r$ vertices, where $c_r$ is a constant depending only on $r$.
\end{conj}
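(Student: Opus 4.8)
As \Cref{conj:Pokrovskiy} is still open in general, I can only sketch the strategy that looks most promising, namely an adaptation of the regularity-method approach of \L{}uczak, R\"odl and Szemer\'edi for Lehel's conjecture.

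The first step is to reduce the problem to a statement about monochromatic \emph{connected matchings} in a bounded-size reduced graph. I would apply the $r$-colour regularity lemma to the $r$-edge-coloured $K_n$ to obtain equitable clusters $V_1,\dots,V_M$, with $M$ bounded in terms of $r$ and the regularity parameter $\e$, together with a small exceptional set $V_0$, such that all but at most $\e\binom{M}{2}$ pairs $\{V_i,V_j\}$ are $\e$-regular in every colour. Forming the reduced graph $R$ on $[M]$ and colouring $\{i,j\}$ by a colour of density at least $1/r-\e$ between $V_i$ and $V_j$ (discarding the few irregular or sparse pairs), $R$ becomes, up to a sparse defect, an $r$-edge-coloured complete graph on $M$ vertices.

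The combinatorial heart of the plan is the following assertion about $R$: the vertices of any $r$-edge-coloured complete graph can be covered by $r$ pairwise vertex-disjoint monochromatic connected matchings, apart from a bounded number of leftover vertices, where a connected matching is a matching all of whose edges lie in one monochromatic component. For $r=2$ this has a short proof (essentially contained in \cite{Luczak1998}); for general $r$ this is precisely where I expect the main obstacle to lie, since one must control the interaction of $r$ colour classes, their component structures, and the vertex-disjointness requirement all at once. Granting such a family in $R$, the standard ``connected matching to cycle'' lemma --- proved via the blow-up lemma, or by a direct greedy embedding using $\e$-regularity --- turns each monochromatic connected matching spanning $t$ clusters into a monochromatic cycle covering all but an $\e$-fraction of the corresponding $\approx tn/M$ vertices; since the $r$ matchings occupy disjoint cluster sets, applying this in parallel produces $r$ vertex-disjoint monochromatic cycles covering all but $O(\e n)$ vertices of $K_n$.

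The final step, and the genuinely hard one, is to shrink the uncovered set from $O(\e n)$ to an absolute constant $c_r$. I would attempt this by absorption: before running the regularity argument, reserve a small, flexible monochromatic ``absorbing path'' in each colour with the property that any sufficiently small remainder can be threaded into it, and then close up the $r$ long cycles through these absorbers. Constructing absorbers that work uniformly across all $r$ colours while keeping the resulting $r$ cycles vertex-disjoint is the principal difficulty, and is presumably the reason \Cref{conj:Pokrovskiy} remains open for $r\ge 3$.
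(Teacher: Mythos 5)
The statement you were asked to prove is \cref{conj:Pokrovskiy}, which is not a result of this paper: the authors state it only as a conjecture and explicitly describe it as ``still widely open.'' There is therefore no proof in the paper for you to match, and you are right not to claim one. Your outline --- apply the $r$-colour regularity lemma, find $r$ vertex-disjoint monochromatic connected matchings in the reduced graph covering almost everything, convert these into long cycles via regularity, and then absorb the $O(\e n)$ leftover --- is the natural strategy one would try, and you correctly identify the two genuine obstacles: proving the connected-matching covering statement for $r\ge 3$, and designing an absorption scheme that works simultaneously for all $r$ colours while preserving vertex-disjointness.

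Be aware, though, that the reduced-graph connected-matching statement you isolate is essentially as strong as the conjecture itself (the cycle-to-connected-matching reduction is a near-equivalence in this setting), so phrasing things this way reorganises the difficulty rather than diminishing it. The best currently known bound in this direction, due to Gy\'arf\'as, Ruszink\'o, S\'ark\"ozy and Szemer\'edi \cite{Gyarfas2006}, still uses $O(r\log r)$ monochromatic cycles, which remains far from the conjectured $r$. Your write-up is an honest and accurate description of an open problem, not a proof, which is the appropriate response here.
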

\noindent The best-known result for general $r$ is due to Gy\'arf\'as, Ruszink\'o, S\'ark\"ozy and Szemer\'edi \cite{Gyarfas2006}, who showed that the vertices of every large enough $r$-coloured complete graph can be partitioned into at most $100 r \log r$ monochromatic cycles.

Similar partitioning problems have been considered for other graphs, for example,  powers of cycles. Given a graph $H$ and a natural number $p$, the $p$-th power of $H$ is the graph obtained from $H$ by putting an edge between any two vertices whose distance is at most $p$ in $H$. Grinshpun and S\'ark\"ozy~\cite{Grinshpun2016} proved that the vertices of every two-coloured complete graph can be partitioned into at most $2^{cp\log p}$ monochromatic $p$-th powers of cycles, where $c$ is an absolute constant. 
They conjectured that a much smaller number of pieces should suffice, which was confirmed by S\'ark\"ozy~\cite{Sarkoezy2017}. 
For more than two colours not much is known. Elekes, D. Soukup, L. Soukup and Szentmikl\'ossy \cite{Elekes2017} proved an analogue of the result of Grinshpun and S\'ark\"ozy for infinite graphs and multiple colours and asked whether it is true for finite graphs.

\begin{problem}[Elekes et al. {\cite[Problem 6.4]{Elekes2017}}\footnote{The problem is phrased differently in \cite{Elekes2017} but this version is stronger, as Elekes et al.\ explain below the problem.}]
  Prove that for every $r,p \in \N$, there is some $c = c(r,p)$ such that the vertices of every $r$-edge-coloured complete graph can be partitioned into at most $c$ monochromatic $p$-th powers of cycles.
\end{problem}
\noindent We shall prove a substantial generalisation of this problem, see \cref{cor:main}.

Another possible generalisation is to study questions about monochromatic partitions for hypergraphs. A $k$-uniform hypergraph ($k$-graph) consists of a vertex set $V$ and a set of $k$-element subsets of $V$. The loose $k$-uniform cycle of length $m$ is the $k$-graph consisting of $m(k-1)$ cyclically ordered vertices and $m$ edges, each edge formed of $k$ consecutive vertices, so that consecutive edges intersect in exactly one vertex. The tight $k$-uniform cycle of length $m$ is the $k$-graph with $m$ cyclically ordered vertices in which any $k$ consecutive vertices form an edge. Loose and tight paths are defined in a similar way. For technical reasons we consider single vertices both as tight and loose cycles and paths.

Questions about monochromatic partitions for hypergraphs were first studied by Gy\'arf\'as and S\'ark\"ozy \cite{Gyarfas2013} who showed that for every $k$, $r \in \N$, there is some $c = c(k,r)$ so that the vertices of every $r$-edge-coloured complete $k$-graph can be partitioned into at most $c$ loose cycles. Later, S\'ark\"ozy \cite{sarkoezy2014} showed that $c(k,r)$ can be be chosen to be $50rk \log (rk)$. Gy\'arf\'as conjectured that a similar result can be obtained for tight cycles.

\begin{conj}[Gy\'arf\'as \cite{Gyarfas2016}]
For every $k,r \in \N$, there is some $c = c(k,r)$ so that the vertices of every $r$-edge-coloured complete $k$-graph can be partitioned into at most $c$ monochromatic tight cycles.
\end{conj}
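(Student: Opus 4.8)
The plan is to prove a common generalisation: for every $k,r \in \N$ and every $d \in \N$, there is some $c = c(k,r,d)$ so that the vertices of every $r$-edge-coloured $k$-graph $\cH$ with independence number $\alpha(\cH) < d$ can be partitioned into at most $c$ monochromatic tight cycles. This will immediately yield Gy\'arf\'as's conjecture (taking $\cH$ complete, i.e.\ $d=2$) and, via the standard reduction, the power-of-cycles statement, since the $p$-th power of a cycle on $\Z$ can be encoded as a tight cycle in an auxiliary $(p+1)$-graph built on the same vertex set. The engine of the argument will be a stability/absorption scheme combined with the regularity method for $k$-graphs.

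\medskip
\noindent\textbf{Step 1 (Reduction to a large monochromatic connected structure).} First I would set up a notion of monochromatic connectivity in the ``tight'' sense: say two vertices are tightly connected in colour $i$ if they lie on a common monochromatic tight path in colour $i$. Using a Ramsey-type/pigeonhole argument on $(k-1)$-tuples together with the bound on $\alpha(\cH)$, one shows that some colour class contains a tight component spanning a positive proportion of the vertices; iterating, a bounded number of monochromatic tight components cover everything. So it suffices to show that each such tightly connected monochromatic piece can itself be partitioned into $O_{k,r,d}(1)$ tight cycles, and then clean up the remaining vertices.

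\medskip
\noindent\textbf{Step 2 (Regularity + a connected tight-cycle factor).} Within one monochromatic tightly-connected piece, apply the (weak/strong) hypergraph regularity lemma to obtain a reduced hypergraph, find in it a large connected ``matching''-like structure, and use an embedding/connecting lemma to convert regular blocks into long tight paths and to splice them together through the connectivity from Step 1 into a bounded number of long tight cycles, covering all but an $\e$-fraction of the vertices. This is exactly the strategy behind the cycle-partition results of \L{}uczak--R\"odl--Szemer\'edi and Gy\'arf\'as--Ruszink\'o--S\'ark\"ozy--Szemer\'edi, adapted to uniformity $k$ and to bounded independence number (which replaces density arguments that used completeness).

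\medskip
\noindent\textbf{Step 3 (Absorbing the exceptional set).} The leftover $o(n)$ vertices must be swallowed. Here I would either build absorbing tight paths/cycles in advance (reserve a small random-like structure that can absorb any small subset, provided each exceptional vertex has large monochromatic tight-degree into the structure, which follows from $\alpha(\cH) < d$), or handle the exceptional vertices greedily one colour at a time, covering each with a short monochromatic tight path and charging it to a bounded budget. Combining with Step 2 gives a partition of one monochromatic connected piece into $O_{k,r,d}(1)$ tight cycles, and summing over the bounded number of pieces from Step 1 completes the proof.

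\medskip
\noindent The main obstacle is Step 2, and specifically the connecting lemma in the tight setting: joining two long monochromatic tight paths of the same colour into a single tight path (or closing one up into a tight cycle) requires finding, between prescribed ordered $(k-1)$-tuples, a short monochromatic tight path of that colour — and unlike the graph case ($k=2$), tight connectivity is not an equivalence relation on vertices but rather on ordered $(k-1)$-tuples, so one must carefully track ``tight-components'' of $(k-1)$-tuples and ensure the regularity-derived paths have endpoints in compatible components. Controlling this, while only assuming $\alpha(\cH)<d$ rather than completeness, is the technical heart of the argument.
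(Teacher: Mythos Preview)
Your high-level framing agrees with the paper: one proves the bounded-independence generalisation and then reads off both Gy\'arf\'as's conjecture and the power-of-cycles result (the latter via exactly the auxiliary $(k+p-1)$-graph you describe). The internal architecture, however, diverges from the paper's, and your Step~1 has a genuine gap.

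\textbf{The gap.} Your Step~1 proposes to decompose the vertex set into boundedly many monochromatic ``tight components'' and then work inside each one. But, as you yourself note at the end, tight connectivity is not an equivalence relation on vertices; the relation ``$u$ and $v$ lie on a common colour-$i$ tight path'' is not transitive for $k\ge 3$, so there are no vertex components to decompose into. Passing to ordered $(k-1)$-tuples gives an equivalence relation, but that does not yield a vertex partition, and even if you restrict to the vertex set spanned by one colour's tight component of $(k-1)$-tuples, you have no control on the independence number of the \emph{full} $r$-coloured hypergraph induced there, so the inductive/iterative scheme you want breaks. Step~2 then inherits this: the \L{}uczak-style connected-matching argument needs exactly the connecting lemma between prescribed ordered $(k-1)$-ends that you flag as the obstacle, and the proposal does not say how to obtain it.

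\textbf{What the paper does instead.} The paper never decomposes into tight components and never runs a connected-matching argument. The engine is the linear Ramsey theorem for bounded-degree hypergraphs (Cooley--Fountoulakis--K\"uhn--Osthus), applied with an extra colour for the non-edges of $\cH$: this directly yields (i) a monochromatic \emph{crown} of linear order, whose base is a tight cycle that can absorb any subset of its linear-size rim, and (ii) by iteration, a greedy cover of all but a $\gamma$-fraction of the remaining vertices by boundedly many monochromatic tight cycles. Hypergraph regularity enters only in the absorption step, and only in the form of a ``short paths'' lemma: any dense $k$-partite $k$-graph contains a dense subgraph in which every two $(k-1)$-sets of positive co-degree are joined by a positively oriented tight path of prescribed length $\in\{k+2,\dots,2k+1\}$, even after deleting any small set. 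This is used, via link graphs, to thread the leftover vertices through the rim of the crown; it is not used to build the long cycles. For non-complete $\cH$ with $\alpha(\cH)\le\alpha$, the paper iterates the crown/greedy/absorb routine $\alpha$ times, showing that any vertex surviving all rounds would lie in an independent transversal of size $\alpha+1$.

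In short: drop the tight-component reduction; replace the connected-matching step by the off-the-shelf linear Ramsey theorem both to find an explicit absorber (the crown) and to do the greedy covering; and confine regularity to the short connecting-paths lemma inside the absorption argument.
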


We shall prove this conjecture and a generalisation in which we allow the host- hypergraph to be any $k$-graph with bounded independence number (i.e.\ without a large set of vertices containing no edges).

\begin{thm}\label{thm:main}
  For every $k,r,\alpha \in \N$, there is some $c = c(k,r,\alpha)$ such that the vertices of every $r$-edge-coloured $k$-graph $G$ with independence number $\alpha(G) \leq \alpha$ can be partitioned into at most $c$ monochromatic tight cycles.
\end{thm}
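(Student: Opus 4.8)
The plan is to follow the now-standard "absorption-free" strategy for monochromatic partitions via the regularity method, adapted to $k$-graphs with bounded independence number. First I would set up a hypergraph regularity lemma (for $k$-graphs this means the strong regularity lemma of R\"odl--Skokan / Gowers, producing a regular partition of $V(G)^{(j)}$ for all $j<k$ together with a reduced $k$-graph on the clusters) and, crucially, observe that if $\alpha(G)\le\alpha$ then the \emph{reduced} $k$-graph $\mathcal R$ is again a $k$-graph with independence number bounded in terms of $\alpha$ and the regularity parameters, since a large independent set of clusters would, after a standard random-selection argument inside the clusters, produce a large independent set in $G$. One then $r$-colours (a slightly sparser subhypergraph of) $\mathcal R$ by the majority colour on each regular $k$-tuple of clusters. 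So the problem is reduced, up to a bounded number of leftover vertices from irregular or atypical tuples, to the corresponding partition problem for the \emph{bounded} object $\mathcal R$.

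Second, I would prove the key combinatorial input at the level of the reduced hypergraph: in any $r$-edge-coloured $k$-graph $H$ with $\alpha(H)\le\alpha'$, one can cover all but a bounded number of vertices by a bounded number of vertex-disjoint monochromatic "tight connected" pieces, where a tight component in colour $i$ is a maximal set of vertices tightly connected using only colour-$i$ edges. The mechanism here is a Ramsey-type / greedy argument: because $\alpha(H)$ is bounded, every sufficiently large vertex set spans an edge, hence spans a monochromatic edge, hence — iterating — spans a long monochromatic tight path; peeling off such paths greedily exhausts all but boundedly many vertices, and the number of pieces used is controlled by a bound on the number of distinct tight components meeting a fixed small "transversal" set (this is where $\alpha$ re-enters, bounding how many colour classes can be "relevant"). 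This step is really the combinatorial heart and I expect it to be the main obstacle: one needs a clean statement — presumably isolated as a lemma earlier in the paper — saying that bounded independence number forces a bounded-size family of monochromatic tight components covering almost everything, and the tight (rather than loose) setting makes the connectivity bookkeeping delicate.

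Third, I would upgrade each monochromatic tight-connected piece of $\mathcal R$ back to a monochromatic tight cycle in $G$. This is the "connecting + long tight cycle in a regular setup" lemma: given a tight-connected collection of clusters in a single colour whose pairwise (tight) densities are bounded away from $0$ and which are regular, one can (i) find short monochromatic tight paths connecting them into one long walk through essentially all the vertices of those clusters, and (ii) close it up into a tight cycle, wasting only $o(n)$ vertices per piece. Applying this to each of the boundedly many pieces yields boundedly many monochromatic tight cycles covering all of $V(G)$ except a set $S$ of size $o(n)$ plus the earlier bounded leftovers. Finally I would clean up $S$: since $\alpha(G)\le\alpha$, the induced subhypergraph $G[S]$ still has independence number at most $\alpha$, so we may recurse — or, more simply, apply the combinatorial lemma directly to $G[S]$ at full strength to partition $S$ itself into a bounded number of monochromatic tight paths (hence tight cycles, as single vertices and edges count). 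Iterating a bounded number of times (each round the exceptional set shrinks by a constant factor until it has bounded size, and a bounded set is trivially a bounded union of single-vertex tight cycles) terminates with a partition of all of $V(G)$ into $c(k,r,\alpha)$ monochromatic tight cycles. The two theorems and the powers-of-cycles corollary mentioned in the introduction then follow by choosing $\alpha=k$ (complete $k$-graphs) and by the standard encoding of $p$-th powers of cycles as tight cycles in an auxiliary $(p+1)$-graph.
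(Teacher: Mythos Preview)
Your proposal has a genuine gap in the termination of the recursion, and this is precisely where the paper's argument is clever. You cover all but a set $S$ of size $o(n)$ (more honestly, $\varepsilon n$ for some small but fixed $\varepsilon$) and then recurse on $G[S]$. But shrinking by a constant factor takes $\Theta(\log n)$ rounds to reach bounded size, so the total number of tight cycles used is unbounded. Your alternative, ``apply the combinatorial lemma directly to $G[S]$ at full strength to partition $S$ itself into a bounded number of monochromatic tight paths'', is circular: a lemma saying that any $r$-coloured $k$-graph with $\alpha\le\alpha'$ can be partitioned into boundedly many monochromatic tight paths \emph{is} the theorem you are trying to prove, and the greedy peeling you sketch in Step~2 yields $\Theta(\log n)$ paths, not $O(1)$. (If Step~2 is only meant to apply to the bounded-size reduced hypergraph $\mathcal R$, then it is trivially true and carries no information.)

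The paper handles this by absorption rather than recursion. It first sets aside an absorber---a monochromatic \emph{crown} found via the linear Ramsey theorem for bounded-degree hypergraphs---whose base can swallow any subset of its rim $B$ into a single monochromatic tight cycle. After greedy covering, the small leftover $R$ is absorbed: an Absorption Lemma (proved using hypergraph regularity applied to \emph{link graphs}, not to all of $G$) covers $R$ with boundedly many cycles using vertices of $B$, and whatever remains of $B$ is swallowed by the crown. For non-complete host hypergraphs some vertices of $R$ may have too few edges into $B$ to be absorbed; the paper iterates exactly $\alpha$ times, setting aside a fresh absorber inside the unabsorbable set at each step, and then proves (Lemma~2.13) that any vertex surviving all $\alpha$ rounds would lie in an independent transversal of the $\alpha$ reserved sets---an independent set of size $\alpha+1$, a contradiction. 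This structural use of the independence-number bound is exactly what your scheme is missing: it converts the a~priori unbounded recursion depth into a bounded one.
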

\noindent We note that a similar result for graphs was obtained by S\'ark\"ozy \cite{Sarkoezy2011}, and for loose cycles in hypergraphs by Gy\'arf\'as and S\'ark\"ozy \cite{Gyarfas2014}.

As a corollary we obtain the following extension of Theorem \ref{thm:main} to $p$-th powers of tight cycles. Here the $p$-th power of a $k$-uniform tight cycle is the $k$-graph obtained by replacing every edge of the $(k+p-1)$-uniform tight cycle by the complete $k$-graph on $k+p-1$ vertices.
\begin{cor}\label{cor:main}
  For every $k,r,p,\alpha \in \N$, there is some $c = c(k,r,p,\alpha)$ such that the vertices of every $r$-edge-coloured $k$-graph $G$ with $\alpha(G) \leq \alpha$ can be partitioned into at most $c$ monochromatic $p$-th powers of tight cycles.
\end{cor}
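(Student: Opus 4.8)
The plan is to deduce \cref{cor:main} from \cref{thm:main} by passing to a hypergraph of higher uniformity. Let $G$ be an $r$-edge-coloured $k$-graph with $\alpha(G)\le\alpha$. If $p=1$ the statement is exactly \cref{thm:main}, so assume $p\ge2$ and define a $(k+p-1)$-uniform hypergraph $G'$ on $V(G)$ by declaring a set $S$ of $k+p-1$ vertices to be an edge of $G'$ of colour $c$ precisely when every one of the $\binom{k+p-1}{k}$ $k$-subsets of $S$ is an edge of $G$ of colour $c$. Since $k+p-1>k$, the colour $c$ is determined by $S$, so this is a well-defined $r$-edge-colouring of $G'$. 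Tracing through the definitions, a monochromatic tight cycle of colour $c$ in $G'$ is \emph{precisely} a monochromatic $p$-th power of a tight $k$-uniform cycle of colour $c$ in $G$: both are a cyclic sequence of vertices in which, for every window of $k+p-1$ consecutive vertices, all $k$-subsets are present and have colour $c$. The lengths admissible for the two notions agree by the very definition of the $p$-th power of a tight cycle, and single vertices are tight cycles and $p$-th powers of every uniformity by convention, so this correspondence is exact, degenerate pieces included.

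Next I would bound $\alpha(G')$ by a Ramsey argument. Let $W\subseteq V(G)$ be independent in $G'$. Colour each $k$-subset of $W$ with one of $r+1$ colours: its colour in $G$ if it is an edge of $G$, and an extra symbol $0$ otherwise. By Ramsey's theorem for $k$-uniform hypergraphs there is a constant $R=R(k,r,p,\alpha)$ (namely the Ramsey number for an $(r+1)$-colouring with target sizes $k+p-1$ in each of the colours $1,\dots,r$ and $\alpha+1$ in colour $0$) so that, whenever $|W|\ge R$, there is a set $T\subseteq W$ all of whose $k$-subsets receive the same colour, with $|T|=k+p-1$ if that colour lies in $\{1,\dots,r\}$ and $|T|=\alpha+1$ if it is $0$. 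Every set of fewer than $k$ vertices is vacuously independent in $G$, so we may assume $\alpha\ge k-1$, which makes this Ramsey number meaningful. In the first case $T$ is an edge of $G'$ contained in $W$, contradicting independence; in the second $T$ is an independent set of $G$ of size $\alpha+1$, contradicting $\alpha(G)\le\alpha$. Hence $\alpha(G')\le R-1=:\alpha'$, a constant depending only on $k,r,p,\alpha$.

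Finally, applying \cref{thm:main} to the $r$-edge-coloured $(k+p-1)$-graph $G'$, whose independence number is at most $\alpha'$, partitions $V(G')=V(G)$ into at most $c(k+p-1,r,\alpha')$ monochromatic tight cycles of $G'$; by the correspondence above each of these is a monochromatic $p$-th power of a tight $k$-uniform cycle of $G$, so $c(k,r,p,\alpha):=c(k+p-1,r,\alpha')$ works. I do not expect a genuine obstacle here: all of the real work — partitioning a host hypergraph of bounded independence number into boundedly many monochromatic tight cycles — is already contained in \cref{thm:main}, and the only things to verify are the exact correspondence between tight cycles of $G'$ and $p$-th powers of $k$-cycles of $G$, the well-definedness of the colouring of $G'$, and the Ramsey bound on $\alpha(G')$, all of which are routine. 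The single points needing a little care are the short-cycle and single-vertex conventions, which the stated definitions are designed to absorb.
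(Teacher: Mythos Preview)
Your proposal is correct and follows essentially the same route as the paper: define the $(k+p-1)$-graph whose coloured edges are the monochromatic $(k+p-1)$-cliques of $G$, bound its independence number by the Ramsey number $R_{r+1}^{(k)}(k+p-1,\ldots,k+p-1,\alpha+1)-1$, apply \cref{thm:main}, and read off $p$-th powers of tight cycles from tight cycles in the auxiliary hypergraph. Your write-up is more explicit about well-definedness and the degenerate cases, but the argument is the same.
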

Since Corollary \ref{cor:main} follows from Theorem \ref{thm:main} easily, we present its short proof here.
\begin{proof}[Proof of Corollary \ref{cor:main}]
For positive integers $k,r,s_1,\ldots,s_r$, let $R_r^{(k)}(s_1, \ldots, s_r)$ denote the $r$-colour Ramsey number for $k$-graphs, that is the smallest positive integer $n$, so that in every $r$-colouring of the complete $k$-graph on $n$ vertices, there is some $i \in [r]$ and $s_i$ distinct vertices which induce a monochromatic clique in colour $i$.

  Let $f(k,r,\alpha)$ be the smallest $c$ for which \cref{thm:main} is true and let $g(k,r,p,\alpha)$ be the smallest $c$ for which \cref{cor:main} is true. We will show that $g(k,r,p,\alpha) \leq f(k+p-1,r,\tilde \alpha)$, where $\tilde \alpha = R_{r+1}^{(k)}\left(k+p-1, \ldots, k+p-1, \alpha+1 \right)-1$.
  Suppose now we are given an $r$-edge-coloured $k$-graph $G$ with $\alpha(G) \leq \alpha$. Define a $(k+p-1)$-graph $H$ on the same vertex-set whose edges are the monochromatic cliques of size $k+p-1$ in $G$. By construction we have $\alpha(H) \leq \tilde \alpha$ and thus, by \cref{thm:main}, there are at most $f(k+p-1,r,\tilde \alpha)$ monochromatic tight cycles partitioning $V(H)$.
  To conclude, note that a tight cycle in $H$ corresponds to a $p$-th power of a tight cycle in $G$.
\end{proof}
In the next section, we shall prove Theorem \ref{thm:main}.



\section{The proof of Theorem \ref{thm:main}}

The proof of Theorem \ref{thm:main} combines the absorption method introduced in \cite{Erdos1991} and the regularity method. For the complete host $k$-graph $G$, the proof of Theorem \ref{thm:main} can be summarised as follows.

First, we find a monochromatic $k$-graph $H_0\subset G$ with the following special property: There is some $B \subset V(H_0)$, so that for every $B' \subset B$ there is a tight cycle in $H_0$ with vertices $V(H_0) \setminus B'$. This is explained in \cref{sec:absmethod}.
We then greedily remove vertex-disjoint monochromatic tight cycles from $V(G)\setminus V(H_0)$ until the set of leftover vertices $R$ is very small in comparison to $B$.
Finally, in \cref{sec:abslemma}, we show that the leftover vertices can be absorbed by $H_0$. More precisely, we show that there are constantly many vertex-disjoint tight cycles with vertices in $R \cup B$ which cover all of $R$. This is the crucial part of the paper and also the place where we use tools from the hypergraph regularity method (introduced in \cref{sec:shortpaths}).

In order to prove the main theorem for host $k$-graphs with bounded independence number, we need to iterate the above process a few times. Here the main difficulty is to show that the iteration process stops after constantly many steps. This will be shown in \cref{sec:proof}.
We start with some basic notation about hypergraphs.
\subsection{Notation}
For a set of vertices $V$ and a natural number $k \geq 2$, let $\binom V k$ denote the set of all $k$-element subsets of $V$. Given a subset $E \subset \binom V k$, $H=(V,E)$ is called a $k$-uniform hypergraph ($k$-graph). We sometimes use the notation $H = (V(H), E(H))$.
The \emph{density} of a $k$-graph $H$ with $n$ vertices is given by $\dens(H) = |E(H)|/\binom{n}{k}$.

Let $H$ be a $k$-graph. Given some $e \subset V(H)$ with $1 \leq |e| \leq k$, we define its \emph{degree} of $e$ by $\deg(e) := \card{f \in E(H): e \subset f}$. If $|e|= 1$ for some $v \in V(H)$ we simply write $\deg(v)$ for $\deg(\{v\})$ and if $|e|=k-1$, we call $\deg(e)$ \emph{co-degree}.
Given a partition $\cP=\left\{ V_1, \ldots, V_t \right\}$ of $V$, we say that $H$ is $\cP$-partite if $| e \cap V_i | \leq 1$ for every $e \in E(H)$ and every $i \in [t]$. The $k$-graph $H$ is $s$-partite if it is $\cP$-partite for some partition $\cP$ of $V$ with $s$ parts.
We denote by $K^{(k)}(\cP)$ the complete $\cP$-partite $k$-graph.
Furthermore, given some $2 \leq j \leq k-1$ and a $j$-graph $H$, we define $K^{(k)}(H)$ to be the set of all $k$-cliques in $H^{(j)}$, seen as a $k$-graph on $V$.

Given a $k$-graph $H$ and $\ell \leq k$ distinct vertices $v_1, \ldots, v_\ell \in V(H)$, we define the \emph{link-graph} $\Lk_{H}(v_1, \ldots, v_\ell)$ as the $(k-\ell)$-graph on $V(H) \setminus \{v_1, \ldots, v_\ell\}$ with edges $\{e \in \binom{V(H)}{k-\ell}: e \cup \{v_1, \ldots, v_l\} \in E(H)\}$. If, in addition, disjoint sets $V_1, \ldots, V_{k-\ell} \subset V(H) \setminus \{v_1, \ldots, v_\ell\}$ are given, we denote by $\Lk_{H}(v_1, \ldots, v_\ell ; V_1, \ldots, V_{k-\ell})$ the $(k-\ell)$-partite $(k-\ell)$-graph with parts $V_1, \ldots, V_{k-\ell}$ and edges $\{e \in K^{(k-\ell)}(V_1, \ldots, V_{k-\ell}): e \cup \{v_1, \ldots, v_\ell\} \in E(H)\}$. If there is no danger of confusion, we drop the subscript $H$.


\subsection{Finding short paths}\label{sec:shortpaths}
The goal of this section is to prove the following lemma, which allows us to find in any dense $k$-graph $G$, a dense subgraph $H \subset G$ in which any two non-isolated $(k-1)$-sets are connected by a short path of a given prescribed length. For this, we need to use basic tools from hypergraph regularity, but the reader may use \cref{lem:shortpaths} as a black box if she would like to avoid it.

Before stating the lemma, we need to introduce some notation. Fix some $k\geq 2$ and a partition $\cP=\{V_1, \ldots, V_k\}$. We call a tight path in $K^{(k)}(\cP)$ \emph{positively oriented} if its vertex sequence $(u_1, \ldots, u_m)$ travels through $\cP$ in cyclic order, i.e.\ there is some $j \in [k]$ such that $u_i \in V_{i+j}$ for every $i \in [m]$, where we identify $k+1 \equiv 1$. In this subsection, we will only consider positively oriented tight cycles. In particular, given some $e  \in K^{(k-1)}(\cP)$, the ordering of $e$ in a tight path starting at $e$ is uniquely determined.

\begin{lemma}\label{lem:shortpaths}
  For every $d > 0$, there are constants $\delta=\delta(d)>0$ and $\gamma=\gamma(d)>0$, such that the following is true for every partition $\cP = \left\{ V_1, \ldots, V_k \right\}$ and every $\cP$-partite $k$-graph $G$ of density at least $d$. There is a $\cP$-partite sub-$k$-graph $H \subset G$ of density at least $\delta$ such that for  every set $S=S_1 \cup \ldots \cup S_k$ with $S_i \subset V_i$ and $\card{S_i}\leq \gamma \card{V_i}$ and any two $e, f \in K^{(k-1)}(\cP)$ which are disjoint from $S$ and have positive co-degree, there is a positively oriented tight path of length $\ell \in \left\{ k+2, \ldots, 2k+1 \right\}$ in $H$ which starts at $e$, ends at $f$ and avoids $S$.
\end{lemma}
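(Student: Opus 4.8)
The plan is to deploy the dense hypergraph regularity method. First I would apply the hypergraph regularity lemma to the $\cP$-partite $k$-graph $G$: since $\dens(G)\ge d$, it yields sub-clusters $W_1\subseteq V_1,\dots,W_k\subseteq V_k$ with $\card{W_i}\ge c\card{V_i}$ for some $c=c(d)>0$, together with an $\e$-regular $(k-1)$-uniform complex $\cA$ supported on $W_1\cup\dots\cup W_k$, such that $G$ restricted to the $k$-cliques $K^{(k)}(\cA)$ is $(\e,d')$-regular for some $d'\ge d/4$ and $\e=\e(d)$ tiny. Let $H_0$ be this restriction; then $\dens(H_0)\ge\delta_0$ for a constant $\delta_0=\delta_0(d)>0$ (whether we view $H_0$ as a $k$-graph on $\bigcup W_i$ or on $V$), and $H_0$ is $\cP$-partite.

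The next step is a cleaning. I would delete from $H_0$ every edge that contains a \emph{bad} $(k-1)$-set, where $e\in K^{(k-1)}(\cP)$ is bad if its co-degree in $H_0$ is atypically small or its link $\Lk_{H_0}(e)$ is not regular with respect to $\cA$; by the regularity of $\cA$ and of $H_0$ over $K^{(k)}(\cA)$, only an $O(\e)$-fraction of $(k-1)$-sets, hence only an $O(\e)$-fraction of edges, are removed, so the resulting subgraph $H$ still has $\dens(H)\ge\delta$ with $\delta=\delta(d)>0$, and $H$ is $\cP$-partite. The point of the cleaning is the invariant it buys: every $e\in K^{(k-1)}(\cP)$ with $\deg_H(e)>0$ is \emph{good}, meaning $e$ is a $(k-1)$-clique of $\cA$ and, for the appropriate orientation, the number of ways to extend the tight path currently ending at $e$ by one further vertex of $H$ is at least $c'\card{W_i}$, and this remains true after deleting any $o(\card{W_i})$ vertices. (Arranging that the cleaning preserves goodness while keeping the density linear is the delicate bit; the standard way is to iterate the deletion a bounded number of times, or to phrase it through the reduced complex, but I will treat it as the routine cleaning step it is.)

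Finally I would prove the connecting statement. Fix $S=S_1\cup\dots\cup S_k$ with $\card{S_i}\le\gamma\card{V_i}$ and $e,f\in K^{(k-1)}(\cP)$ disjoint from $S$ with positive co-degree in $H$; by the invariant $e,f$ are good and their vertices lie in $\bigcup W_i$, while $\card{S_i\cap W_i}\le(\gamma/c)\card{W_i}$. Let $V_a,V_b$ be the parts missed by $e,f$. A positively oriented tight path starting at $e$ has its $j$-th vertex in $V_{a+j}$, so its final $(k-1)$-set is correctly placed for $f$ precisely when its length $\ell$ has a prescribed residue modulo $k$; the window $\{k+2,\dots,2k+1\}$ has length $k$ and lies high enough that it contains exactly one admissible $\ell$ and that the initial and final $(k-1)$-segments of the path are disjoint, so fix that $\ell$. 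Now grow a tight path forward from $e$ one vertex at a time: at each of the at most $\ell$ steps the tip is a good $(k-1)$-set, so there are at least $c'\card{W_i}$ choices for the next vertex, of which we must avoid only $S$ and the at most $\ell$ previously chosen vertices, a total of at most $(\gamma/c)\card{W_i}+\ell<c'\card{W_i}/2$; symmetrically grow a path backward from $f$; and close the bounded-length gap in the middle with one more extension step, which again has $\Theta(\card{W_i})$ valid choices. Choosing $\e,\gamma$ small enough as functions of $d$ makes every step succeed, producing the desired path in $H$ avoiding $S$.

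The main obstacle is the cleaning step together with the extension bookkeeping: one must ensure that after passing to the dense subgraph $H$ \emph{every} $(k-1)$-set of positive co-degree is robustly extendable, and then check that throughout the bounded-length growth the forbidden set (namely $S$ plus the $O(k)$ vertices used so far) never exhausts the $\Theta(\card{W_i})$ available extensions, which is where the smallness of $\e$ and $\gamma$ relative to $d$ is used. Both points are by now standard in the hypergraph regularity literature (this is essentially the tight-path connecting lemma), but they are the technical heart of the argument; an alternative, heavier route is to work in the reduced hypergraph, pass to a dense tightly-connected sub-hypergraph, and lift several regular blocks, routing $e$ to $f$ along the tight connectivity.
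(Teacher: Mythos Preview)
Your overall plan---apply hypergraph regularity to obtain a regular complex on sub-clusters $W_i\subseteq V_i$, clean out $(k-1)$-sets of small co-degree, then connect $e$ and $f$ by a short tight path avoiding $S$---matches the paper's proof-sketch, which proceeds via \cref{thm:reg-complex} followed by exactly such a cleaning.

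The gap is in your connecting step. You propose to grow greedily from $e$ forwards and from $f$ backwards and then ``close the bounded-length gap in the middle with one more extension step, which again has $\Theta(\card{W_i})$ valid choices''. This does not work as described. If the forward path ends at $e'=(a_1,\dots,a_{k-1})$ and the backward path begins at $f'=(b_1,\dots,b_{k-1})$, then a single closing vertex $v$ must make all $k$ of the sets $\{a_j,\dots,a_{k-1},v,b_1,\dots,b_{j-1}\}$, $j\in[k]$, edges of $H$ simultaneously; these mix vertices of $e'$ and $f'$, so the goodness (large co-degree) of $e'$ and $f'$ \emph{separately} gives you nothing, and independent greedy growth from the two ends provides no joint control. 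There is no reason such a $v$ exists, and the same obstruction persists for any bounded number of closing vertices chosen greedily.

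The paper handles precisely this point via an extension lemma (\cref{lem:extension}): in the regular complex, all but a $\beta$-fraction of pairs $(e',f')$ of $(k-1)$-edges are $\beta$-typical, meaning they are joined by roughly the expected (large) number of tight paths of the required length. One then takes $X$ to be the one-step extensions of $e$ and $Y$ the one-step backward extensions of $f$, notes that the sub-families $\tilde X,\tilde Y$ avoiding $S$ are still large, finds a typical pair in $\tilde X\times\tilde Y$, and counts that not all of the promised paths can touch $S$. You correctly flag this step as ``the technical heart'' and name the tight-path connecting lemma, but your one-vertex-at-a-time description does not implement it; the argument is a genuine counting/extension step over pairs, not a greedy single-vertex choice.
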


Note that the length of the cycle in \cref{lem:shortpaths} is uniquely determined by the types of $e$ and $f$. The \emph{type} of $e \in K^{(k-1)}(\cP)$, denoted by $\tp{e}$, is the unique index $i \in [k]$ such that $e \cap V_i = \emptyset$.
Given two $(k-1)$-sets $e,f \in K^{(k-1)}(\cP)$, the type of $(e,f)$ is given by $\tp{e,f}:=\tp f - \tp e  (\mathrm{mod} \ k)$. It is easy to see that every tight path in $K^{(k)}(\cP)$ which starts at $e$ and ends at $f$ has length $\ell k + \tp{e,f}$ for some $\ell \geq 0$. In particular, in \cref{lem:shortpaths}, we have $ \ell = k + \tp{e,f}$ if $\tp{e,f} \geq 2$ and $ \ell = 2k + \tp{e,f}$ otherwise.

\subsubsection{Hypergraph regularity}
We will now introduce the basic concepts of hypergraph regularity in order to state a simple consequence of the strong hypergraph regularity lemma which guarantees a dense regular complex in every large enough $k$-graph.

For technical reasons, we want to see a $1$-graph on some vertex-set $V$ as a partition of $V$ in what follows. We call $\cH^{(k)}=(H^{(1)},\ldots,H^{(k)})$ a $k$-complex if $H^{(j)}$ is a $j$-graph for every $j \in [k]$ and $H^{(j)}$ underlies $H^{(j+1)}$, i.e.\ $H^{(j+1)} \subset K^{(j+1)}(H^{(j)})$ for every $j \in [k-1]$. Note that, in particular, $H^{(j)}$ is $H^{(1)}$-partite for every $j \in [k]$. We call $\cH^{(k)}$ $s$-partite if $H^{(1)}$ consists of $s$ parts.

Now, given some $j$-graph $H^{(j)}$ and some underlying $(j-1)$-graph $H^{(j-1)}$, we define the \emph{density} of $H^{(j)}$ w.r.t.\ $H^{(j-1)}$ by
\[ \den{H^{(j)}|H^{(j-1)}} = \frac{\card{H^{(j)} \cap K^{(j)}(H^{(j-1)})}}{{\card{K^{(j)}(H^{(j-1)})}}}.\]
We are now ready to define regularity.

\begin{definition}\
  \begin{itemize}
    \item Let $r,j \in \N$ with $j \geq 2$, $\e,d_j >0$, and $H^{(j)}$ be a $j$-partite $j$-graph and $H^{(j-1)}$ be an underlying ($j$-partite) $(j-1)$-graph. We call $H^{(j)}$ $(\e,d_j,r)$-regular w.r.t.\ $H^{(j-1)}$ if for all $Q_1^{(j-1)}, \ldots, Q_r^{(j-1)} \subset E(H^{(j-1)}) $, we have
      \[ \card{\bigcup\nolimits_{i \in [r]} K^{(j)}\left(Q_i^{(j-1)}\right)} \geq \e \card{K^{(j)}\left(H^{(j-1)}\right)} \implies \card{\den{H^{(j)} \middle| \bigcup\nolimits_{i \in [r]} Q_i^{(j-1)}}-d_j} \leq \e.\]
     We simply say $(\e,*,r)$-regular for $(\e, \den{H^{(j)}|H^{(j-1)}}, r)$-regular and $(\e,d)$-regular for $(\e,d,1)$-regular.
    \item Let $j,s \in \N$ with $s \geq j \geq 2$, $\e,d_j >0$, and $H^{(j)}$ be an $s$-partite $j$-graph and $H^{(j-1)}$ be an underlying ($s$-partite) $(j-1)$-graph. We call $H^{(j)}$ $(\e,d_j)$-regular w.r.t.\ $H^{(j-1)}$ if $H^{(j)}[V_1,\ldots,V_j]$ is $(\e,d_j)$-regular w.r.t.\ $H^{(j-1)}[V_{i_1}, \ldots, V_{i_j}]$ for all $1 \leq i_1 < \ldots < i_j \leq s$, where $\{V_1, \ldots, V_s\}$ is the vertex partition of $V(H^{(j)})$.
    \item Let $k,r \in \N$, $\e,\e_k,d_2,\ldots,d_k >0$, and $\cH^{(k)}=(H_1, \ldots, H_k)$ be a $k$-partite $k$-complex. We call $\cH^{(k)}$ $(d_2,\ldots,d_k,\e,\e_k,r)$-regular, if $H^{(j)}$ is $(\e,d_j)$-regular with respect to $H^{(j-1)}$ for every $j = 2,\ldots,k-1$ and $H^{(k)}$ is $(\e_k,d_k,r)$-regular w.r.t.\ $H^{(k-1)}$.
  \end{itemize}
\end{definition}

The following theorem is a direct consequence of the strong hypergraph regularity lemma as stated in \cite{Roedl2007} (with the exception that we allow for an initial partition of not necessarily equal sizes).

\begin{thm}
\label{thm:reg-complex}
For all integers $k \geq 2$, constants $\e_k > 0$, and functions $\e: (0,1) \to (0,1)$ and $r: (0,1) \to \N$, there exists some $\delta=\delta(k,\e,\e_k,r) > 0$ such that the following is true.
For every partition $\cP = \{V_1, \ldots, V_k \}$ of some set $V$ and every $\cP$-partite $k$-graph $G^{(k)}$ of density $d \geq 2\e_k$, there are sets $ U_i \subset V_i$ with $ \card{U_i} \geq \delta \card{V_i}$ for every $i \in [k]$
and constants $d_2, \ldots, d_{k-1} \geq \delta$ and $d_k \geq d/2$ for which there exists some $(d_2,\ldots,d_k,\e(\delta),\e_k,r(\delta))$-regular $k$-complex $\cH^{(k)}$, so that $H^{(1)}=\{U_1, \ldots, U_k\}$.
\end{thm}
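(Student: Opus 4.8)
The plan is to \emph{derive} \cref{thm:reg-complex} from the strong hypergraph regularity lemma of~\cite{Roedl2007}: apply it to $G^{(k)}$ and then pick out a single dense regular polyad by a double count. Concretely, I would apply the lemma in the form that takes an initial partition, feeding it $\cP=\{V_1,\dots,V_k\}$ — now allowed to have unequal parts, the only deviation from~\cite{Roedl2007} — the top-level error $\e_k$, and the auxiliary functions $t\mapsto \e(1/(2t))$ and $t\mapsto r(1/(2t))$, where we may assume $\e$ non-increasing and $r$ non-decreasing by passing to monotone envelopes. It returns a family of partitions $\cQ=\{\cQ^{(1)},\dots,\cQ^{(k-1)}\}$ refining $\cP$, of complexity $t\le t_0=t_0(k,\e,\e_k,r)$, such that each $V_i$ is split equitably, each cell of $\cQ^{(j)}$ ($2\le j\le k-1$) is $(\e(1/(2t)),d_j)$-regular over the $(j-1)$-polyad beneath it for a common density $d_j\ge 1/t_0$, and $G^{(k)}$ is $\e_k$-regular with respect to $\cQ$. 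Only $\cP$-transversal polyads are relevant — a polyad with two vertex classes inside a single $V_i$ contains no edge of the $\cP$-partite $G^{(k)}$ and is vacuously regular — so allowing unequal initial parts changes nothing beyond the (merely equitable, not globally equal) refinement of the $V_i$. Setting $\delta:=1/(2t_0)$ and using that $(\e',d)$-regularity implies $(\e,d)$-regularity for $\e\ge\e'$, while $(\e,d,R)$-regularity implies $(\e,d,R')$-regularity for $R'\le R$, we have $\e(1/(2t))\le\e(\delta)$, $r(1/(2t))\ge r(\delta)$ and $d_j\ge\delta$.

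Next I would make ``$\e_k$-regular with respect to $\cQ$'' precise and run the count. Every $\cP$-transversal $k$-set lies in $K^{(k)}(\hat P)$ for exactly one $(k-1)$-polyad $\hat P$ — its $(k-1)$-shadows determine a unique consistent tower of $\cQ$-cells — so $\{K^{(k)}(\hat P)\}_{\hat P}$ partitions $K^{(k)}(\cP)$ and hence
\[
  \sum_{\hat P}\card{K^{(k)}(\hat P)}=\prod_{i=1}^{k}\card{V_i}
  \qquad\text{and}\qquad
  \sum_{\hat P}\card{E(G^{(k)})\cap K^{(k)}(\hat P)}=\card{E(G^{(k)})}=d\prod_{i=1}^{k}\card{V_i},
\]
the last equality because $E(G^{(k)})\subseteq K^{(k)}(\cP)$. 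The lemma guarantees that the polyads with respect to which $G^{(k)}$ is \emph{not} $(\e_k,\ast,r(\delta))$-regular contribute at most $\e_k\prod_i\card{V_i}$ to the first sum, hence carry at most $\e_k\prod_i\card{V_i}$ edges of $G^{(k)}$. As $d\ge 2\e_k$, the regular polyads therefore carry at least $(d-\e_k)\prod_i\card{V_i}\ge\tfrac{d}{2}\prod_i\card{V_i}$ edges while spanning at most $\prod_i\card{V_i}$ $k$-cliques in total; by averaging there is a $(k-1)$-polyad $\hat P^{\ast}$ with respect to which $G^{(k)}$ is $(\e_k,\ast,r(\delta))$-regular and with $\card{E(G^{(k)})\cap K^{(k)}(\hat P^{\ast})}\ge\tfrac{d}{2}\card{K^{(k)}(\hat P^{\ast})}$.

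Finally I would build the complex around $\hat P^{\ast}$. Let $\{U_1,\dots,U_k\}$, $U_i\subseteq V_i$, be the vertex classes under $\hat P^{\ast}$; by equitability $\card{U_i}\ge\delta\card{V_i}$ (for $G^{(k)}$ on fewer than $n_0=n_0(k,\e,\e_k,r)$ vertices, where the regularity lemma does not apply, take instead each $U_i$ to be a vertex of some edge of $G^{(k)}$, which exists since $d>0$, and the complete complex on these $k$ vertices — valid once $\delta\le 1/n_0$, which we may assume). Put $H^{(1)}:=\{U_1,\dots,U_k\}$; for $2\le j\le k-1$ let $H^{(j)}$ be the union over $J\in\binom{[k]}{j}$ of the $\cQ^{(j)}$-cell between $\{U_i:i\in J\}$ in the tower of $\hat P^{\ast}$; let $H^{(k-1)}:=\hat P^{\ast}$ and $H^{(k)}:=E(G^{(k)})\cap K^{(k)}(\hat P^{\ast})$. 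Consistency of the tower gives $H^{(j+1)}\subseteq K^{(j+1)}(H^{(j)})$ for all $j$, so $\cH^{(k)}:=(H^{(1)},\dots,H^{(k)})$ is a $k$-complex with $H^{(1)}=\{U_1,\dots,U_k\}$; equitability makes each $H^{(j)}$ ($2\le j\le k-1$) $(\e(\delta),d_j)$-regular over $H^{(j-1)}$ with $d_j\ge\delta$ (using $\e(1/(2t))\le\e(\delta)$); and the second paragraph makes $H^{(k)}$ $(\e_k,d_k,r(\delta))$-regular over $H^{(k-1)}$ with $d_k:=\den{H^{(k)}|H^{(k-1)}}\ge d/2$ (recall $(\e_k,\ast,\cdot)$-regular means $(\e_k,\den{H^{(k)}|H^{(k-1)}},\cdot)$-regular). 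Hence $\cH^{(k)}$ is $(d_2,\dots,d_k,\e(\delta),\e_k,r(\delta))$-regular, as desired.

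I expect the main difficulty to be bookkeeping rather than a new idea: faithfully quoting from~\cite{Roedl2007} the precise form of equitability (each cell regular, of a uniform density, over the polyad beneath it), confirming that in the $\cP$-partite, unequal-parts version the weight of irregular polyads is controlled against $\prod_i\card{V_i}$ rather than a larger count — which is exactly where $\cP$-partiteness of $G^{(k)}$ enters — and threading the parameters so that the output constants are the prescribed $\e(\delta),r(\delta)$, which is handled above by the monotone envelopes and the choice $\delta:=1/(2t_0)$. The only genuinely new step, the averaging in the second paragraph, is short.
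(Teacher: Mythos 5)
The paper gives no proof of this theorem, asserting it as a direct consequence of the strong hypergraph regularity lemma of R\"odl and Schacht; your derivation --- apply that lemma with initial partition $\cP$, discard the irregular polyads (which carry at most $\e_k\prod_i\card{V_i}$ edges precisely because $G^{(k)}$ is $\cP$-partite), average to find a regular polyad of relative density at least $d/2$, and assemble the complex from its tower of cells --- is exactly the intended argument and is correct. The only nit is that with the paper's definition $\dens(G)=\card{E(G)}/\binom{n}{k}$ you get $\card{E(G^{(k)})}\geq d\prod_i\card{V_i}$ rather than equality, which only helps the averaging step.
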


We will use the following special case of the extension lemma in \cite[Lemma 5]{Cooley2009} to find short tight paths between almost any two $(k-1)$-sets in a regular complex. Fix a $(d_2,\ldots,d_k,\e,\e_k)$-regular complex $H^{(k)}=(\cP,H^{(2)}, \ldots, H^{(k)})$, where $\cP=\left\{ V_1, \ldots, V_k \right\}$. Let $H_i^{(k-1)} \subset H^{(k-1)}$ denote the edges of type $i$ and note that the dense counting lemma for complexes \cite[Lemma 6]{Cooley2009} implies that
\[ \card{H^{(k-1)}_{i_0}} = (1 \pm \e) \prod_{j=2}^{k-1} d_j^{\binom{k-1}{j}} \prod_{i \in [k] \setminus i_0} \card{V_i}.\]

Given some $\beta >0$, we call a pair $\left(e,f\right) \in H_{i_1}^{(k-1)} \times H^{(k-1)}_{i_2}$ $\beta$-typical for $\cH^{(k)}$ if the number of tight paths of length $\ell := k + \tp{i_1,i_2}$ in $H^{(k)}$ which start at $e$ and end at $f$ is
\[(1 \pm \beta) \prod_{j=2}^k d_j^{\ell \binom{k-1}{j-1} - \binom{k-1}{j}} \prod_{i \in \{i_1,\ldots, i_2\}} \card{V_i},\]
where $\left\{ i_1, \ldots, i_2 \right\}$ is understood in cyclic ordering.
Note here that the number of $j$-sets in a $k$-uniform tight path of length $\ell$ which are contained in an edge is $\ell \binom{k-1}{j-1} + \binom{k-1}{j}$. However, $2 \binom{k-1}{j}$ of these are contained in $e$ (the first $k-1$ vertices) or $f$ (the last $k-1$ vertices), which are already fixed in our example.

\begin{lemma}
  Let $k,r,n_0 \in \N$, $\beta,d_2,\ldots,d_k,\e,\e_k > 0$ and suppose that
  \[ 1/n_0 \ll 1/r, \e \ll \min\{\e_k,d_2,\ldots,d_{k-1}\} \leq \e_k \ll \beta,d_k,1/k.\]
  Then the following is true for all integers $n \geq n_0$, for all indices $i_1,i_2 \in [k]$ and every $(d_2,\ldots,d_k,\e,\e_k,r)$-regular complex $\cH^{(k)}=\left(H^{(1)},\ldots,H^{(k)}\right)$ with $\card{V_i} \geq n_0$ for all $i \in [k]$, where $H^{(1)}=\left\{ V_1, \ldots, V_k \right\}$. All but at most $\beta \card{H_{i_1}^{(k-1)}}  \card{H_{i_2}^{(k-1)}}$ pairs $\left(e,f\right) \in H_{i_1}^{(k-1)} \times H^{(k-1)}_{i_2}$ are $\beta$-typical for $\cH^{(k)}$.
  \label{lem:extension}
\end{lemma}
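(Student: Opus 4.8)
The plan is to derive \cref{lem:extension} as an essentially immediate corollary of the dense counting/extension lemma for regular complexes from \cite[Lemma 5]{Cooley2009}. The key point is that a tight path of length $\ell = k + \tp{i_1,i_2}$ starting at $e \in H^{(k-1)}_{i_1}$ and ending at $f \in H^{(k-1)}_{i_2}$ is exactly a labelled copy of a fixed $k$-complex $\cF$ — the ``closure'' of the tight path skeleton — in which the $2(k-1)$ vertices forming $e$ and $f$ are pre-embedded, and the remaining $|\{i_1,\dots,i_2\}|$ vertices (one in each part $V_i$ with $i$ in the cyclic interval strictly between the two copies) are to be extended. So I would set $\cF$ to be the $k$-uniform tight path on $\ell$ vertices together with all its sub-cliques (making it a $k$-complex), distinguish the ``root'' vertices as the first $k-1$ and last $k-1$ vertices, and apply the extension lemma with this $\cF$.

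First I would recall the statement of the extension lemma: given a regular complex and a fixed complex $\cF$ with a specified set of roots, for all but a $\beta$-fraction of tuples of roots that themselves span a copy of the rooted sub-complex, the number of extensions to a full copy of $\cF$ is $(1\pm\beta)$ times the ``expected'' count, namely $\prod_{j\ge 2} d_j^{(\#j\text{-sets in }\cF\text{ not inside the roots})}$ times the product of the sizes of the parts receiving a new vertex. Second, I would carry out the bookkeeping of exponents: a $k$-uniform tight path on $\ell$ vertices contains $\ell\binom{k-1}{j-1} + \binom{k-1}{j}$ many $j$-sets lying inside some edge (this is the count remarked on just before the lemma — one can see it by charging each such $j$-set to the edge consisting of its ``leftmost'' $k$ consecutive vertices, with the extra $\binom{k-1}{j}$ coming from the last edge's sets that are not leftmost-charged), of which exactly $2\binom{k-1}{j}$ lie inside $e$ or inside $f$; subtracting gives the exponent $\ell\binom{k-1}{j-1} - \binom{k-1}{j}$ appearing in the definition of $\beta$-typical. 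Third, I would note that the parts receiving a genuinely new vertex are precisely those indexed by the cyclic interval $\{i_1,\dots,i_2\}$ strictly between $e$ and $f$ (the rooted vertices already occupy the $2(k-1)$ parts other than these), giving the factor $\prod_{i\in\{i_1,\dots,i_2\}}\card{V_i}$. Fourth, I would observe that a pair $(e,f) \in H^{(k-1)}_{i_1}\times H^{(k-1)}_{i_2}$ automatically spans the rooted sub-complex of $\cF$, since $e$ and $f$ are edges of $H^{(k-1)}$ and hence all their sub-cliques lie in the lower levels of $\cH^{(k)}$; thus the ``for all but a $\beta$-fraction of valid root tuples'' conclusion of the extension lemma translates directly into ``all but at most $\beta\,\card{H^{(k-1)}_{i_1}}\card{H^{(k-1)}_{i_2}}$ pairs $(e,f)$''. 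Finally, the hierarchy $1/n_0 \ll 1/r,\e \ll \min\{\e_k,d_2,\dots,d_{k-1}\} \le \e_k \ll \beta,d_k,1/k$ is exactly what is needed to feed into \cite[Lemma 5]{Cooley2009} (with the roles of the various small parameters matched up), after possibly shrinking $\beta$ by a constant factor depending only on $k$ to absorb the error terms introduced when passing from ``number of labelled copies of $\cF$'' to ``number of tight paths'' and when using the counting lemma estimate for $\card{H^{(k-1)}_{i_0}}$.

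I expect the main obstacle to be purely notational rather than mathematical: one has to match the (somewhat heavy) parameter conventions and the ``rooted complex / link'' formalism of \cite{Cooley2009} to the conventions set up in this paper, and in particular verify that the extension lemma there is stated in enough generality to allow two separate root-sets ($e$ and $f$) rather than a single one — if it is not, one applies it with the root-set $e \cup f$ and checks that the rooted sub-complex is exactly the disjoint union of the two clique-complexes on $e$ and on $f$ (they are disjoint because $e$ and $f$ lie in disjoint parts when $\tp{i_1,i_2}\ge 2$, and the degenerate overlapping cases $\tp{i_1,i_2}\in\{0,1\}$, which would make $\ell = 2k+\tp{i_1,i_2}$, can be handled separately or are simply not needed in \cref{lem:shortpaths}). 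A secondary, genuinely quantitative point is confirming that the exceptional-pair bound really can be taken multiplicatively in $\card{H^{(k-1)}_{i_1}}\card{H^{(k-1)}_{i_2}}$ and not merely in $\prod_{i\ne i_1}\card{V_i} \cdot \prod_{i\ne i_2}\card{V_i}$; this follows because, by the counting lemma estimate quoted above, $\card{H^{(k-1)}_{i_0}} = (1\pm\e)\big(\prod_{j=2}^{k-1} d_j^{\binom{k-1}{j}}\big)\prod_{i\ne i_0}\card{V_i}$, so the two quantities differ only by a constant factor depending on $k$ and the $d_j$, which is again harmless after adjusting $\beta$.
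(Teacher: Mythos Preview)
Your proposal is correct and coincides with the paper's approach: the paper does not give a separate proof of \cref{lem:extension} but simply introduces it as ``the following special case of the extension lemma in \cite[Lemma 5]{Cooley2009}'', and your write-up fills in precisely the bookkeeping needed to see this --- identifying the tight path (with its down-closure) as the target complex, taking $e\cup f$ as the rooted sub-complex, and verifying that the exponents and the factor $\prod_{i\in\{i_1,\dots,i_2\}}\card{V_i}$ come out as in the definition of $\beta$-typical. Your remarks on the hierarchy of constants and on converting the exceptional bound from $\prod_{i\ne i_1}\card{V_i}\cdot\prod_{i\ne i_2}\card{V_i}$ to $\card{H^{(k-1)}_{i_1}}\card{H^{(k-1)}_{i_2}}$ via the counting-lemma estimate are also in line with how the paper sets things up.
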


Combining \cref{thm:reg-complex,lem:extension} gives \cref{lem:shortpaths}.

\begin{proof}[Proof-sketch of \cref{lem:shortpaths}.]
  Apply \cref{thm:reg-complex} with suitable constants and delete all $e \in H^{(k-1)}$ of small co-degree. Let $e \in H_{i_1}^{(k-1)}$ and $f \in  H_{i_2}^{(k-1)}$ for some $i_1,i_2 \in [k]$ and define
\begin{align*}
  X &= \left\{ g^{(k-1)} \in H_{i_1+1}^{(k-1)}:e \cup g^{(k-1)} \in H^{(k)}  \right\} \text{ and} \\
  Y &= \left\{ g^{(k-1)} \in H_{i_2-1}^{(k-1)}:f \cup g^{(k-1)} \in H^{(k)}  \right\}.
\end{align*}
   Let $\tilde X \subset X$ and $\tilde Y \subset Y$ be the sets of all those edges in $X$ and $Y$ avoiding $S$. By \cref{lem:extension} at least one pair in $\tilde X \times \tilde Y$ must be typical and by a counting argument not all of the promised paths can touch $S$.
\end{proof}
\subsection{Absorption Method}\label{sec:absmethod}
The idea of the absorption method is to first cover almost every vertex by vertex-disjoint monochromatic tight cycles and then absorb the leftover using a suitable absorption lemma.

\begin{lemma}\label{lem:greedy}
  For all $k,r,\alpha \in \N$ and every $ \gamma > 0 $, there is some $ c = c(k,r,\alpha, \gamma)$ so that the following is true for every $r$-coloured $k$-graph $G$ on $n$ vertices with $\alpha(G) \leq \alpha$. There is a collection of at most $c$ vertex-disjoint monochromatic tight cycles whose vertices cover all but at most $ \gamma n $ vertices.
\end{lemma}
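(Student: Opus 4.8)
The plan is to iterate a single "chunk-removal" step: as long as the remaining vertex set $W$ is large (of size at least $\gamma n$), I will find a single monochromatic tight cycle inside $G[W]$ covering a constant fraction of $W$, delete its vertices, and repeat. If each such step removes at least a $\eta=\eta(k,r,\alpha)$-fraction of the current leftover, then after $c' = \lceil \log_{1/(1-\eta)} (1/\gamma)\rceil$ iterations the leftover has dropped below $\gamma n$, and we have used at most $c'$ cycles, giving $c = c(k,r,\alpha,\gamma)$.

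So the heart of the matter is the following claim: \emph{there is $\eta = \eta(k,r,\alpha)>0$ such that every $r$-coloured $k$-graph $G$ with $\alpha(G)\le\alpha$ contains a monochromatic tight cycle on at least $\eta\,|V(G)|$ vertices.} To prove this, first note that since $\alpha(G)\le\alpha$, the $k$-graph $G$ itself is dense: in fact by a Ramsey-type / supersaturation argument $G$ contains $\Omega(n^k)$ edges (any $k$-set of vertices that spans no edge would, together with $\binom{\alpha}{k-1}$ others, be an independent set if $G$ were sparse; more simply, the complement of $G$ has no clique of size $\alpha+1$, hence by Turán-type bounds $e(G)=\Omega(n^k)$). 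By averaging over the $r$ colours, some colour class $G_i$ has density at least $d = d(k,r,\alpha)>0$. Now partition $V(G_i)$ into $k$ parts $V_1,\dots,V_k$ of almost equal size at random; a positive fraction of the edges of $G_i$ become crossing, so the induced $\cP$-partite subgraph $G_i' \subset G_i$ still has density $\Omega_{k,r,\alpha}(1)$. Apply \cref{lem:shortpaths} to $G_i'$ with this density to obtain a $\cP$-partite $H \subset G_i'$ of density at least $\delta$ with the short-path connecting property for sets $S$ with $|S_j| \le \gamma_0|V_j|$ (here $\gamma_0 = \gamma(d)$ from the lemma).

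To build the long cycle, I would greedily concatenate short tight paths inside $H$. Start with any $(k-1)$-set $e_0 \in K^{(k-1)}(\cP)$ of positive co-degree; repeatedly, having built a positively oriented tight path $P$ so far covering at most $\gamma_0\min_j|V_j|$ vertices, pick a fresh $(k-1)$-set $f$ of positive co-degree disjoint from $V(P)$ and of a suitable type, and use \cref{lem:shortpaths} with $S = V(P)$ to extend $P$ by a short path to $f$; stop once $V(P)$ reaches size roughly $\gamma_0 \min_j |V_j| = \Theta_{k,r,\alpha}(n)$. Finally close the path into a tight cycle by applying \cref{lem:shortpaths} once more to connect the last $(k-1)$-set back to the first one (here one must choose the types along the way so that the total length is consistent with closing up — a routine bookkeeping matter, using the freedom in the allowed lengths $\{k+2,\dots,2k+1\}$). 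This yields a monochromatic tight cycle of length $\eta n$ with $\eta = \eta(k,r,\alpha) > 0$, which proves the claim and hence the lemma.

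The main obstacle I anticipate is the last point: ensuring that the concatenated short paths can actually be closed into a genuine tight cycle, i.e.\ controlling the cyclic types and lengths so that the endpoints match up with the correct orientation, while simultaneously keeping every application of \cref{lem:shortpaths} valid (the accumulated forbidden set $S$ must stay below the $\gamma(d)$-threshold at every step, which limits the cycle length but is exactly what we want). A secondary technical point is checking that after deleting a long cycle the leftover $G[W]$ still has independence number at most $\alpha$ (trivially true, as induced subgraphs only decrease $\alpha$) and still has $|W| \ge \gamma n$ so the density bounds persist — this is why we stop the iteration exactly when $|W| < \gamma n$.
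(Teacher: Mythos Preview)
Your iteration scheme matches the paper's exactly; the difference lies in how you obtain the key claim that every $r$-coloured $k$-graph $G$ with $\alpha(G)\le\alpha$ contains a monochromatic tight cycle on $\eta\,|V(G)|$ vertices. The paper gets this in one line: colour the non-edges with an extra colour $r+1$ and apply \cref{thm:lin-ram} (the linear Ramsey theorem for bounded-degree hypergraphs) with $H_1=\dots=H_r$ tight cycles and $H_{r+1}=K_{\alpha+1}^{(k)}$; the assumption $\alpha(G)\le\alpha$ rules out colour $r+1$, so some genuine colour yields the long cycle. Your route instead passes through a density bound, a random balanced $k$-partition, \cref{lem:shortpaths}, and a greedy build-and-close argument. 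This is heavier but has the merit of avoiding the black box \cref{thm:lin-ram} and using only the paper's own regularity machinery.

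Your approach is correct, and the obstacle you flag is not a genuine one. Two small fixes: in the extension step take $S=V(P)\setminus e$ where $e$ is the current end (not $S=V(P)$, since the lemma needs $e$ disjoint from $S$); in the closing step take $S=V(P)\setminus(e_0\cup e_m)$. Because a tight path has pairwise distinct vertices, the internal vertices of the short path returned by \cref{lem:shortpaths} are automatically disjoint from its own ends $e,f$, hence from all of $V(P)$, so both concatenation and closing produce a genuine tight cycle. The divisibility of the total length by $k$ is forced by the positive orientation (one checks $m+\ell'-k+1\equiv 0\pmod k$ directly from $\ell'\equiv\tp{e_m,e_0}\pmod k$), so no bookkeeping on types along the way is needed.
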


\begin{definition}
Let $G$ be a hypergraph, $ \chi $ be a colouring of $E(G)$ and $A,B \subset V(G)$ disjoint subsets. $A$ is called an \emph{absorber} for $B$ if there is a monochromatic tight cycle with vertices $A \cup B'$ for every $B' \subset B$.
\end{definition}

\begin{lemma}\label{lem:crownk}
  For every $k,r,\alpha\in \N$, there is some $\beta = \beta(k,r,\alpha) > 0$ such that the following is true for every $k$-graph $G$ with $\alpha(G) \leq \alpha$. In every $r$-colouring of $E(G)$ there are disjoint sets $A,B \subset V(G)$ with $|B| \geq \beta |V(G)|$ such that $A$ absorbs $B$.
\end{lemma}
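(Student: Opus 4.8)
The plan is to build the absorber $A$ as a disjoint union of many small \emph{gadgets}, each of which can absorb one vertex of $B$, and then to combine these gadgets into a single monochromatic tight cycle using \cref{lem:shortpaths}. First I would use a Ramsey-type argument to reduce to a convenient partite setting: since $\alpha(G) \le \alpha$, by Ramsey's theorem $G$ contains a large collection of pairwise disjoint monochromatic cliques, and in fact (partitioning $V(G)$ greedily into monochromatic cliques, and then applying Ramsey again to the "reduced" hypergraph whose vertices are these cliques) one obtains a large \emph{monochromatic "blow-up"} structure: a colour, say red, a partition of a linear-sized vertex subset into parts $V_1,\dots,V_k$ of equal size $m$, and a red $\cP$-partite $k$-graph $G' \subset G$ on $V_1 \cup \dots \cup V_k$ of density bounded below by a constant $d = d(k,r,\alpha)$. (The point of passing to clique-blow-ups is that inside a blown-up clique one has complete freedom to route tight paths through the copies.)

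Next I would apply \cref{lem:shortpaths} to $G'$ to obtain a subgraph $H \subset G'$ of density $\ge \delta$ with the short-connectivity property for sets $S$ with $|S_i| \le \gamma|V_i|$. The absorber will live inside $H$. Fix $B \subset V_1$ (say) of size $|B| = \beta m$ with $\beta$ small. For each $b \in B$ I want a small "switch" gadget $A_b \subset V(H) \setminus B$ together with two designated $(k-1)$-sets $e_b, f_b$ of positive co-degree in $H$, such that: (i) there is a tight path in $H$ on vertex set $A_b$ from $e_b$ to $f_b$ \emph{not using} $b$, and (ii) there is a tight path in $H$ on vertex set $A_b \cup \{b\}$ from $e_b$ to $f_b$. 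Such a gadget is easy to produce from a single edge containing $b$: take $b$ together with $k-1$ other vertices forming a red edge, extend slightly on each side to a short tight path $P_b$ of controlled length with endpoint $(k-1)$-sets $e_b, f_b$ of positive co-degree (possible since $H$ still has linearly many edges through generic $(k-1)$-sets after deleting low-co-degree ones), and let $A_b$ be $V(P_b) \setminus \{b\}$; then (ii) is $P_b$ itself, and (i) is a tight path on $A_b$ from $e_b$ to $f_b$ obtained by re-routing around $b$ inside the same small clique-blow-up, again via \cref{lem:shortpaths} applied locally. Choosing the gadgets greedily one at a time, each uses only $O_k(1)$ vertices, so as long as the total is $\le \gamma m /2$, i.e.\ $\beta = \beta(k,r,\alpha)$ small enough, they can be taken pairwise disjoint and disjoint from $B$. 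Set $A = \bigcup_{b \in B} A_b$.

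Finally I would check the absorbing property. Given any $B' \subset B$, I want a monochromatic tight cycle on $A \cup B'$. For $b \in B'$ use path (ii) on $A_b \cup \{b\}$; for $b \notin B'$ use path (i) on $A_b$; in both cases the path runs from $e_b$ to $f_b$. Now concatenate these $|B|$ short paths into one tight cycle by inserting, between consecutive gadgets, a short connecting tight path in $H$ given by \cref{lem:shortpaths} (the endpoints $f_b$ and $e_{b'}$ have positive co-degree, and the set $S$ of already-used vertices has size $\le \gamma m$ provided $\beta$ was chosen small relative to $\gamma$, so the connector exists and avoids everything used so far). Enumerate the gadgets $A_{b^1},\dots,A_{b^t}$ in an arbitrary cyclic order and connect $A_{b^t}$ back to $A_{b^1}$ to close the cycle; adjusting orientations/types so the cyclic length is consistent is handled by the freedom in path lengths $\{k+2,\dots,2k+1\}$ in \cref{lem:shortpaths}. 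The resulting red tight cycle has vertex set exactly $A \cup B'$, as required.

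The main obstacle is the first step: extracting a linear-sized constant-density monochromatic \emph{partite blow-up} structure from an arbitrary $k$-graph of bounded independence number. A single monochromatic clique from Ramsey's theorem need only have constant size, so one must iterate — greedily pull out disjoint monochromatic cliques of bounded size to cover a positive fraction of $V(G)$, form the reduced hypergraph on these cliques, observe it again has bounded independence number, and apply Ramsey again to get many cliques-of-cliques of a common colour — and then argue this yields the desired blown-up configuration on a linear vertex set. This is essentially the mechanism the paper defers to \cref{sec:proof}, and getting a clean quantitative version of it (so that $\beta$ genuinely depends only on $k,r,\alpha$) is where the real work lies; the rest is routine gadget-building and concatenation via \cref{lem:shortpaths}.
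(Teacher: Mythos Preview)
Your approach is quite different from the paper's, and it contains a genuine gap.

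The paper's proof is very short: it introduces the \emph{crown} $T_t^{(k)}$ (a tight cycle on $t(k-1)$ base vertices, together with $t$ rim vertices, each attached by $k$ consecutive edges), observes that the base of a crown absorbs its rim, and then finds a monochromatic crown of linear order by a single application of the linear Ramsey theorem for bounded-degree hypergraphs (\cref{thm:lin-ram}). Non-edges of $G$ are given a dummy colour $r+1$, and since $\alpha(G)\le\alpha$ there is no $K_{\alpha+1}^{(k)}$ in that colour, so one of the real colours contains a crown $T_{cN}^{(k)}$. No regularity, no gadgets, no partite reduction.

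Your gadget construction cannot work as written. You place $B\subset V_1$ and work inside a $k$-partite $k$-graph, where you want for each $b\in B$ two tight paths between the \emph{same} ordered $(k-1)$-tuples $e_b,f_b$: one on $A_b$ and one on $A_b\cup\{b\}$. But in a $k$-partite $k$-graph every tight path is periodic through the parts with period $k$ (since $u_i$ and $u_{i+k}$ must lie in the same part), so the length of any tight path from $e_b$ to $f_b$ is fixed modulo $k$. Two such paths whose vertex sets differ in size by exactly $1$ therefore cannot both exist when $k\ge 2$. ``Re-routing around $b$ via \cref{lem:shortpaths}'' does not help: any detour avoiding $b$ must pick up at least $k-1$ extra vertices to return to the correct part, and those extra vertices would then be in $A_b$ but uncovered whenever $b\in B'$, so the cycle would not have vertex set exactly $A\cup B'$. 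This length-mod-$k$ obstruction is precisely what the crown sidesteps by \emph{not} living in a $k$-partite host: a rim vertex is inserted between two base vertices that lie in the same ``slot'' of the cyclic pattern, which is impossible to arrange inside a strict $k$-partition.

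A secondary issue: you set $A=\bigcup_b A_b$ and only afterwards, given $B'$, insert connecting paths between consecutive gadgets. Those connector vertices are then not in $A$, so the resulting cycle is not on $A\cup B'$. This is fixable (choose the connectors once and for all and add them to $A$, since the endpoints $e_b,f_b$ do not depend on $B'$), but the parity obstruction above is not.
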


The following hypergraph will function as our absorber. A very similar hypergraph was used by Gy\'arf\'as and S\'ark\"ozy to absorb loose cycles \cite{Gyarfas2013,Gyarfas2014}. See \cref{fig:crown} for an example.

\begin{definition}\label{lem:crown}
  The ($k$-uniform) crown of order $t$, $T_{t}^{(k)}$, is a tight cycle with $ n=t(k-1)$ vertices $ v_0, \ldots, v_{n-1}$ (the base) and additional vertices $ u_0, \ldots, u_{t-1}$ (the rim). Furthermore, for each $ i=0, \ldots, t-1$, we add the $ k$ edges $ \{u_i,v_{(k-1)i+j}, \ldots,v_{(k-1)i+j +k -2} \}$, $ j=0, \ldots, k-1$.
\end{definition}

\begin{figure}[ht]
\centering

\begin{tikzpicture}
\def \r {2cm}
\def \n {8}
\def \vxsize {0.25cm}

\newcommand{\convexpath}[2]{
	[   
	create hullcoords/.code={
		\global\edef\namelist{#1}
		\foreach [count=\counter] \nodename in \namelist {
			\global\edef\numberofnodes{\counter}
			\coordinate (hullcoord\counter) at (\nodename);
		}
		\coordinate (hullcoord0) at (hullcoord\numberofnodes);
		\pgfmathtruncatemacro\lastnumber{\numberofnodes+1}
		\coordinate (hullcoord\lastnumber) at (hullcoord1);
	},
	create hullcoords
	]
	($(hullcoord1)!#2!-90:(hullcoord0)$)
	\foreach [
	evaluate=\currentnode as \previousnode using \currentnode-1,
	evaluate=\currentnode as \nextnode using \currentnode+1
	] \currentnode in {1,...,\numberofnodes} {
		let \p1 = ($(hullcoord\currentnode) - (hullcoord\previousnode)$),
		\n1 = {atan2(\y1,\x1) + 90},
		\p2 = ($(hullcoord\nextnode) - (hullcoord\currentnode)$),
		\n2 = {atan2(\y2,\x2) + 90},
		\n{delta} = {Mod(\n2-\n1,360) - 360}
		in 
		{arc [start angle=\n1, delta angle=\n{delta}, radius=#2]}
		-- ($(hullcoord\nextnode)!#2!-90:(hullcoord\currentnode)$) 
	}
}

\pgfdeclarelayer{bg}    
\pgfsetlayers{bg,main}  

\foreach \i in {1,...,\n} {
	\pgfmathsetmacro\angle{(\i -1)*360/\n}
	\node[draw=black,fill=black!50, circle, minimum width = \vxsize, inner sep=0cm] (v\i) at (\angle:\r) {};
}


\foreach \i in {2,4,6,8} {
	\pgfmathsetmacro\angle{(\i -1.5)*360/\n}
	\node[draw=black,fill=black!50, circle, minimum width = \vxsize, inner sep=0cm] (u\i) at (\angle:1.9*\r) {};
}

\begin{pgfonlayer}{bg}
\begin{scope}[fill opacity=0.4]
\foreach \i in {2,4,...,\n} {
	\pgfmathsetmacro\anglea{(\i -1)*360/\n - 8}
	\pgfmathsetmacro\angleb{(\i)*360/\n}
	\pgfmathsetmacro\anglec{(\i +1)*360/\n + 8}
	\filldraw[fill=red, draw=black, thick] (\anglea:\r) 
	to[out = \anglea, in = \angleb + 270] (\angleb:1.27*\r) 
	to[out = 90 + \angleb, in = \anglec] (\anglec:\r)
	to[out = 180 + \anglec, in =90+ \angleb] (\angleb:0.73*\r)
	to[out = 270 + \angleb, in = 180+\anglea] (\anglea:\r);
}
\end{scope}
\end{pgfonlayer}

\begin{pgfonlayer}{bg}
\begin{scope}[fill opacity=0.4]
\foreach \i in {1,3,...,\n} {
	\pgfmathsetmacro\anglea{(\i -1)*360/\n - 8}
	\pgfmathsetmacro\angleb{(\i)*360/\n}
	\pgfmathsetmacro\anglec{(\i +1)*360/\n + 8}
	\draw[fill=red,draw=black, thick] (\anglea:\r) 
	to[out = \anglea, in = \angleb + 270] (\angleb:1.15*\r) 
	to[out = 90 + \angleb, in = \anglec] (\anglec:\r)
	to[out = 180 + \anglec, in =90+ \angleb] (\angleb:0.85*\r)
	to[out = 270 + \angleb, in = 180+\anglea] (\anglea:\r);
}
\end{scope}
\end{pgfonlayer}

\begin{pgfonlayer}{bg}
\begin{scope}[fill opacity=0.3]
\draw[fill=blue] \convexpath{v2,v3,u2}{0.5cm};
\draw[fill=blue] \convexpath{v2,u2,v1}{0.5cm};
\draw[fill=blue] \convexpath{u2,v8,v1}{0.5cm};
\draw[fill=blue] \convexpath{v1,u8,v8}{0.5cm};
\draw[fill=blue] \convexpath{v8,u8,v7}{0.5cm};
\draw[fill=blue] \convexpath{u8,v6,v7}{0.5cm};
\draw[fill=blue] \convexpath{v7,u6,v6}{0.5cm};
\draw[fill=blue] \convexpath{v6,u6,v5}{0.5cm};
\draw[fill=blue] \convexpath{u6,v4,v5}{0.5cm};
\draw[fill=blue] \convexpath{v5,u4,v4}{0.5cm};
\draw[fill=blue] \convexpath{v4,u4,v3}{0.5cm};
\draw[fill=blue] \convexpath{u4,v3,v2}{0.5cm};
\end{scope}
\end{pgfonlayer}
\end{tikzpicture}

\caption{A $3$-uniform crown of order $4$. The edges of the tight cycle are red and the remaining edges are blue.}
\label{fig:crown}
\end{figure}

It is easy to see that the base of a crown is an absorber for the rim. To prove \cref{lem:crownk}, we therefore only need to show that we can always find monochromatic crowns of linear size. Both this and \cref{lem:greedy} are consequences of the following theorem of Cooley, Fountoulakis, K\"uhn, and Osthus \cite{Cooley2009} (see also \cite{Ishigami2007} and \cite{Conlon2009}).

\begin{thm}\label{thm:lin-ram}
  For every $r,k,\Delta \in \N$, there is some $C=C(r,k,\Delta)>0$ so that the following is true for all k-graphs $H_1, \ldots, H_r$ with at most $n$ vertices and maximum degree at most $\Delta$, and every $N \geq Cn$. In every edge-colouring of $K_N^{(k)}$ with colours $c_1, \ldots, c_r$, there is some $i \in [r]$ for which there is a $c_i$-monochromatic copy of $H_i$.
\end{thm}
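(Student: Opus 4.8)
The plan is to combine the strong hypergraph regularity lemma with an embedding lemma for bounded-degree $k$-graphs, following the argument of Chv\'atal, R\"odl, Szemer\'edi and Trotter for graphs. Set $s_0 := (k-1)\Delta + 1$; this will be the number of parts into which we split each $H_i$, because the ``conflict graph'' on $V(H_i)$ (join two vertices when they lie in a common edge of $H_i$) has maximum degree at most $(k-1)\Delta$, so a greedy colouring partitions $V(H_i)$ into parts $W_1, \dots, W_{s_0}$ each meeting every edge of $H_i$ in at most one vertex. Set $s := R^{(k)}_r(s_0, \dots, s_0)$, pick $\eta = \eta(s,k) > 0$ small, and then choose the regularity parameters ($\e_k$, the functions $\e(\cdot)$ and $r(\cdot)$, and a lower bound $t_0$ on the number of clusters) so that $t_0$ is large enough for the clique-counting argument below, so that the proportion of irregular $k$-tuples of cells is at most $\eta$, and so that \cref{lem:extension} (more precisely, the embedding lemma built from it) applies with room to spare; finally let $C = C(r,k,\Delta)$ be large enough that $N \ge Cn$ forces every cluster to have size at least $n$, with slack.

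First I would apply the strong hypergraph regularity lemma to the $r$-coloured $K_N^{(k)}$, obtaining a partition of the vertex set into $t \ge t_0$ clusters together with a regular partition of the pairs, triples, \dots\ of clusters, with respect to which every colour class $G_i^{(k)}$ is regular on all but at most $\eta\binom t k$ of the $k$-tuples of cells. Form the reduced $r$-coloured $k$-graph on the clusters (strictly, on the cells): on each $k$-tuple on which all colour classes are regular, the corresponding densities sum to essentially $1$, so some colour has density at least $1/(2r)$ there; colour the $k$-tuple by such a colour. Since at most $\eta\binom t k$ $k$-tuples are left uncoloured, a standard counting of $K_s^{(k)}$-templates (each uncoloured $k$-set kills at most $\binom{t-k}{s-k}$ of the $\binom ts$ templates) shows that for $\eta$ small and $t$ large there is an $s$-set of clusters all of whose $k$-subsets are coloured; by $s = R^{(k)}_r(s_0, \dots, s_0)$, among those there are $s_0$ clusters $V_1, \dots, V_{s_0}$ all of whose $\binom{s_0}{k}$ $k$-subsets receive the same colour $i$. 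Thus we have a dense regular $s_0$-partite $k$-complex in colour $i$ with parts of size at least $n$.

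It then remains to embed $H_i$ into this structure. I would use the partition $V(H_i) = W_1 \cup \dots \cup W_{s_0}$ fixed above and embed $W_j$ into $V_j$. This is the content of the embedding lemma for bounded-degree $k$-graphs into dense regular $k$-complexes: order the vertices of $H_i$ arbitrarily and embed them greedily, maintaining for every not-yet-embedded vertex a large candidate set (the intersection of the appropriate link-graphs over its already-embedded neighbours) and, crucially, maintaining that all lower-level link-complexes attached to the partially embedded edges through that vertex are still regular. Because $H_i$ has maximum degree at most $\Delta$, each newly embedded vertex restricts only a bounded number of these candidate sets, and each such set is a bounded intersection of neighbourhoods, hence still a positive proportion of a cluster; regularity, in the quantitative form of \cref{lem:extension} and its relatives, then always allows the next vertex to be placed. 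Since $|V_j| \ge n \ge |W_j|$ with slack, the greedy embedding succeeds and yields the desired monochromatic copy of $H_i$.

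The main obstacle is precisely this last embedding lemma: carrying the whole hierarchy of link-complexes through the greedy embedding so that regularity is preserved at every level is the technical heart of all hypergraph-regularity arguments, and it is the only place where the bounded-degree hypothesis is genuinely used. In a full write-up one would either invoke the embedding lemma of Cooley, Fountoulakis, K\"uhn and Osthus directly (of which \cref{lem:extension} is the simplest instance) or re-derive it by induction on $|V(H_i)|$. A minor secondary difficulty is making the reduction in the second paragraph clean: one has to work with the cells of the regular partition of pairs, triples, \dots\ rather than with the clusters alone, and discard the few irregular or low-density configurations, but this is entirely routine.
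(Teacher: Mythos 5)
The paper does not prove \cref{thm:lin-ram} at all: it is quoted directly as a theorem of Cooley, Fountoulakis, K\"uhn and Osthus \cite{Cooley2009} (with \cite{Ishigami2007} and \cite{Conlon2009} given as alternative sources), so there is no in-paper argument to compare your proposal against. What you have written is, in effect, an outline of the proof in \cite{Cooley2009}, and the skeleton is right: pass to the conflict graph on $V(H_i)$ to make each $H_i$ an $s_0$-partite $k$-graph with $s_0=\Delta(k-1)+1$; apply the strong hypergraph regularity lemma to the $r$-coloured $K_N^{(k)}$; colour the (cell-level) reduced $k$-graph by a densest regular colour, discarding the few irregular or sparse configurations; apply the hypergraph Ramsey number $R^{(k)}_r(s_0,\dots,s_0)$ to locate a monochromatic dense regular $s_0$-partite $k$-complex; and embed $H_i$ into it via the bounded-degree embedding lemma. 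Your counting estimate for why a fully-coloured $s$-set of cells survives (each uncoloured $k$-set ruins at most $\binom{t-k}{s-k}$ of the $\binom{t}{s}$ candidate $s$-sets, so $\eta<1/\binom{s}{k}$ suffices) is correct and, importantly, gives a threshold for $\eta$ that does not depend on $t$, as it must.

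The honest caveat, which you state yourself, is that this is an outline and not a self-contained proof: essentially all of the technical difficulty is packed into the final embedding step (carrying the full hierarchy of regular link-complexes through a greedy vertex-by-vertex embedding), and you defer that step entirely to \cite{Cooley2009}. You also correctly flag the second, smaller subtlety that the reduced object must be built on the cells of the regularity partition at every level $2,\dots,k-1$, not merely on the vertex clusters, so that the lower levels of the complex you hand to the embedding lemma are consistent across all $\binom{s_0}{k}$ chosen $k$-tuples. Since the paper itself treats \cref{thm:lin-ram} as a black box, deferring these points to the cited source is consistent with the paper's usage; but if you intend this to stand as a proof rather than a pointer, the embedding lemma (and the consistency of the reduced complex) would need to be carried out, and that is where the real work is.
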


\begin{proof}[Proof of \cref{lem:crownk}]
  Suppose $k,r,\alpha$ and $G$ are given as in the theorem and that $E(G)$ is coloured with $r$ colours. Let $N = \card{V(G)}$, $\Delta := \max \left\{ 2k,\binom{\alpha}{k-1} \right\}$ and $c=1/((k-1)C)$ where $C=C(r+1,k,\Delta)$ is given by \cref{thm:lin-ram}. Furthermore, let $n = \card{V(T_{cN}^{(k)})} = N/C$.
  Consider now the $(r+1)$-colouring of $E\left( K_N^{(k)} \right)$ in which every edge in $E(G)$ receives the same colour as in $G$ and every other edge receives colour $r+1$. Let $H_{r+1} = K_{\alpha +1}^{(k)}$ and $ H_i = T_{cN}^{(k)}$ for all $ i \in [t]$, and note that $\Delta(H_i) \leq \Delta$ for all $ i \in [r+1]$. By choice of $\Delta$, there is no monochromatic $H_{r+1}$ in colour $r+1$ and hence, since $N \geq Cn$, there is a monochromatic copy of $H_i$ for some $i \in [r]$. Therefore, there is a monochromatic crown of size $c|V(G)|$ and its base is an absorber for its rim.
\end{proof}

\begin{proof}[Proof of \cref{lem:greedy}]
  Applying \cref{thm:lin-ram} with $r+1$ colours, uniformity $k$, maximum degree $\Delta = \max\{k,\binom{\alpha}{k-1}\}$, and $H_1 = \ldots = H_r$ being tight cycles on $n/(C_{Thm~\ref{thm:lin-ram}}(r+1,k,\Delta))$ vertices and $H_{r+1} = K_{\alpha+1}^{(k)}$ gives the following.
  There exist some $\e = \e(r,k,\alpha)$ so that in every $r$-coloured $k$-graph $G$ on $n$ vertices with $\alpha(G) \leq \alpha$, there is a monochromatic tight cycle on at least $\e n$ vertices.\footnote{Here, we treat non-edges as colour $r+1$ again.} By iterating this process $i$ times, we find $i$ vertex-disjoint monochromatic tight cycles covering all but $(1-\e)^i n$ vertices. This finishes the proof, since $(1-\e)^i \to 0$ as $ i \to \infty$.
\end{proof}

\subsection{Absorption Lemma}\label{sec:abslemma}
In this section we prove a suitable absorption lemma for our approach.

\begin{lemma}\label{lem:keyk}
	For every $\e > 0$ and $k,r \in \N$, there is some $\gamma = \gamma(k,r,\e)>0$ and some $c = c(k,r,\e)$ such that the following is true. Let $H$ be a $k$-partite $k$-graph with parts $B_1, \ldots, B_k$ such that $ |B_1| \geq \ldots  \geq |B_{k-1}| \geq |B_k|/\gamma $ and $|\Lk(v;B_1, \ldots, B_{k-1})| \geq \e |B_1| \cdots \cdot |B_{k-1}|$ for every $v \in B_k$. Then, in every $r$-colouring of $E(H)$, there are $c$ vertex-disjoint monochromatic tight cycles covering $B_k$.
\end{lemma}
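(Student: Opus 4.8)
The plan is to reduce to a single colour and then cover $B_k$ by a bounded number of monochromatic tight cycles, each obtained by ``snaking'' through $B_k$ and closing up, using \cref{lem:shortpaths} and hypergraph regularity to make the snakes robust. First the reductions. Fix $\gamma$ small compared with $k,r,\e$ and the constants coming from \cref{thm:reg-complex} and \cref{lem:shortpaths}. If $|B_k|$ is bounded by a constant depending only on $k,r,\e$, cover each of its vertices by a single-vertex cycle and stop; so assume $|B_k|$, and hence every $|B_i|$, is large. For each $v\in B_k$ some colour accounts for at least an $\e/r$-fraction of all $(k-1)$-tuples in $B_1\times\dots\times B_{k-1}$; colouring $v$ by such a majority colour partitions $B_k=B_k^1\cup\dots\cup B_k^r$, and since $|B_1|,\dots,|B_{k-1}|$ all exceed $|B_k|/\gamma$ it suffices to cover each $B_k^i$ in turn, one colour after another, by boundedly many colour-$i$ tight cycles. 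So fix $i=1$, let $H_1\subseteq H$ be the spanning $k$-partite subgraph of colour-$1$ edges, and write $L_v:=\Lk_{H_1}(v;B_1,\dots,B_{k-1})$, a $(k-1)$-partite $(k-1)$-graph with $|L_v|\ge(\e/r)\,|B_1|\cdots|B_{k-1}|$ for every $v\in B_k$.

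Next, a linkability graph. A (positively oriented) tight cycle of $H_1$ that occupies the $B_k$-slots with vertices $v_1,\dots,v_t\in B_k$ in cyclic order determines, between consecutive $v_j$ and $v_{j+1}$, the $(k-1)$-set $g_j$ of vertices lying between them; the $k$ edges through $v_j$ force $g_{j-1},\dots,g_j$, read in staircase order, to be a tight path of length $2(k-1)$ inside $L_{v_j}$, and in particular each $g_j$ lies in $L_{v_j}\cap L_{v_{j+1}}$. Conversely such a cycle can be built whenever $v_1,\dots,v_t$ form a cyclic sequence of ``linkable'' vertices. Define a graph $\sim$ on $B_k$ by letting $u\sim v$ when $u$ and $v$ are \emph{robustly} linkable: there are $\Omega(|B_1|\cdots|B_{k-1}|)$ choices of a common $(k-1)$-set that moreover extends to the required staircases on both sides. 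The crucial observation is that $\alpha(\sim)\le\alpha_0=\alpha_0(k,r,\e)$: a set of pairwise non-linkable vertices would yield a family of subsets of the $|B_1|\cdots|B_{k-1}|$-element set of $(k-1)$-tuples, each of density $\ge\e/r$ but any two almost disjoint, and by inclusion--exclusion such a family has bounded size. To make ``extends to staircases'' meaningful one first passes, for each $v$, to a dense regular sub-$(k-1)$-complex of $L_v$ via \cref{thm:reg-complex} (or applies \cref{lem:shortpaths} inside $L_v$), so that from almost every $(k-1)$-set of the cleaned-up link many length-$2(k-1)$ tight paths emanate, reaching an $\Omega(1)$-dense family of $(k-1)$-sets.

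Now build the cycles. Since $\alpha(\sim)\le\alpha_0$, the Gallai--Milgram theorem partitions $B_k$ into at most $\alpha_0$ paths of $\sim$; merging, together with closing up the boundedly many paths whose endpoints happen to be non-adjacent (using a bounded set of $B_k$-vertices reserved in advance, or \cref{lem:shortpaths}), turns this into a partition of $B_k$ into $O_{\alpha_0}(1)$ cycles of $\sim$, where single vertices and edges count as degenerate cycles. Realise each such cyclic sequence as a genuine vertex-disjoint monochromatic tight cycle of $H_1$: handling the sequences one at a time, go around choosing the gap $(k-1)$-sets $g_j$ and threading the length-$2(k-1)$ staircases greedily, always avoiding every vertex used so far. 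This succeeds because robust linkability leaves $\Omega(|B_1|\cdots|B_{k-1}|)$ admissible choices at each step, whereas the total number of vertices ever taken from each $B_j$ with $j<k$ is at most $|B_k|\le\gamma|B_j|$, a negligible fraction. The single-vertex sequences are covered by single-vertex cycles, as are the reserved $B_k$-vertices left over at the end, and summing over the $r$ colour classes gives a bound on the number of cycles depending only on $k,r,\e$.

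The main obstacle is this last step: turning the abstract cyclic sequences in $\sim$ into honest vertex-disjoint tight cycles. This is exactly where hypergraph regularity is needed — to guarantee that each dense link $L_v$ contains the staircases from (almost) any prescribed $(k-1)$-set, so that the greedy threading never gets stuck and, crucially, that the last staircase of a cycle can be forced to close up onto the first. One must also track the ``budget'' of $B_k$-vertices carefully: any closing step realised through \cref{lem:shortpaths} consumes a few vertices of $B_k$, and these cannot already lie on the cycles being constructed, so only boundedly many may be spent — which is precisely why it is essential that the whole cover uses only a bounded number of cycles rather than, say, covering a constant fraction of $B_k$ and iterating.
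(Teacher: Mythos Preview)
Your high-level strategy matches the paper's: reduce to one colour, build an auxiliary graph of bounded independence number on (something indexed by) $B_k$, cover it by boundedly many paths, and then lift each path to a genuine tight cycle by threading short tight paths through the link graphs. The gap is in the lifting, which you rightly flag as the crux but do not actually carry out.

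The difficulty is that you apply \cref{lem:shortpaths} (or regularity) \emph{separately} to each link $L_v$, obtaining a cleaned $L'_v$. For the greedy threading at step $j$ you need the set of edges reachable from the already-fixed $g_{j-1}$ by a length-$k$ staircase inside $L'_{v_j}$ to meet the robust-link set of $(v_j,v_{j+1})$ in many edges. But the forward-reachable set of a fixed $g_{j-1}$ is just \emph{some} dense subset of $L'_{v_j}$, while the robust-link set lives in $L'_{v_{j+1}}$, a regular piece of a \emph{different} dense graph chosen independently; two such dense sets can be disjoint, and nothing in your definition of $\sim$ rules this out. The closing-up is worse: there you must find a staircase in $L'_{v_1}$ between two \emph{specific} edges $g_t$ and $g_1$, and neither \cref{lem:shortpaths} (which in a $(k-1)$-graph connects $(k-2)$-sets by paths of length at least $k+1$, not the length $k$ your one-vertex-per-segment lifting demands) nor the extension lemma (only typical pairs) delivers this. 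Your remark about ``consuming a few vertices of $B_k$'' to close up via \cref{lem:shortpaths} does not help, since those $B_k$-vertices already belong to other $\sim$-cycles.

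The paper's fix is to first group the vertices of $B_k$ into blocks of four whose links share a dense \emph{common} $(k-1)$-graph (\cref{lem:blocks}), and to apply \cref{lem:shortpaths} once per block to that common graph. The auxiliary bounded-independence graph is then on blocks, not single vertices. Each block-path is lifted by traversing it forward and then backward: on the way out a path $P_s$ of length $2k-3$ inside the common link of block $s$ covers two of its $B_k$-vertices, and on the way back $Q_s$ covers the other two. Because $P_s$ lies in one common link, all its edges automatically serve every $B_k$-vertex of the block, so the length produced by \cref{lem:shortpaths} matches the lifting; and because the forward--backward walk starts and ends at literally the same edge $e_0=e_0'$, the cycle closes for free. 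The size-four blocking is precisely what makes this arithmetic (two forward, two back) come out, and it is the missing idea in your sketch.
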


Note that it is enough to cover all but a bounded number of vertices, since we allow single vertices as tight cycles. We will make use of this in the proof and frequently remove few vertices.

We will use the following theorem of P\' osa \cite{Posa1963}.

\begin{thm}[P\' osa]\label{thm:posa}
  In every graph $G$, there is a collection of at most $\alpha(G)$ cycles whose vertices partition $V(G)$.
\end{thm}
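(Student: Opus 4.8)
The plan is to prove P\'osa's theorem by induction on $|V(G)|$, crucially using the convention that single vertices and single edges count as cycles. The inductive step will peel off one carefully chosen cycle $C$ so that $\alpha(G-V(C))\le\alpha(G)-1$, and then apply the induction hypothesis to $G-V(C)$.

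For the base and the degenerate cases: if $V(G)=\emptyset$ there is nothing to prove since $\alpha(G)=0$; and if $G$ has an isolated vertex $v$, I would remove it, apply induction to $G-v$ (noting $\alpha(G-v)=\alpha(G)-1$), and add back the trivial cycle $\{v\}$.

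So assume $G$ is nonempty with minimum degree at least $1$, hence $G$ has an edge. I would fix a longest path $P=v_0v_1\cdots v_\ell$ in $G$, so $\ell\ge 1$. By maximality of $P$, every neighbour of $v_0$ lies on $P$; let $v_i$ be the neighbour of $v_0$ on $P$ with largest index, where $1\le i\le\ell$. Then $C:=v_0v_1\cdots v_iv_0$ is a cycle (just the edge $v_0v_1$ when $i=1$). Applying the induction hypothesis to $G':=G-V(C)$ yields a partition of $V(G')$ into at most $\alpha(G')$ cycles, and adding the single cycle $C$ gives a partition of $V(G)$ into at most $\alpha(G')+1$ cycles; so everything comes down to the inequality $\alpha(G')\le\alpha(G)-1$.

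The heart of the argument — and the only step that is not bookkeeping — is precisely this inequality. The point is that every neighbour of $v_0$ in $G$ lies inside $V(C)$: none lies off $P$ by maximality of $P$, and none is among $v_{i+1},\dots,v_\ell$ by maximality of $i$, so all neighbours of $v_0$ are among $v_1,\dots,v_i$. Hence for any independent set $I$ of $G$ contained in $V(G)\setminus V(C)$, the set $I\cup\{v_0\}$ is again independent, which forces $|I|\le\alpha(G)-1$; in particular $\alpha(G')\le\alpha(G)-1$, closing the induction. The only thing to stay careful about is to keep the degenerate-cycle conventions in play throughout, so that $C$ counts as a genuine cycle when $i=1$ and so that the isolated-vertex case goes through; beyond that, the proof is short.
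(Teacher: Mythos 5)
Your proof is correct. Note that the paper does not prove this statement at all -- it is quoted as a known result with a citation to P\'osa's 1963 paper -- so there is no in-paper argument to compare against; what you give is the classical proof (longest path $P=v_0\cdots v_\ell$, close a cycle $C$ through $v_0$ and its furthest neighbour on $P$, observe $N(v_0)\subseteq V(C)$ so that $\alpha(G-V(C))\le\alpha(G)-1$, and induct), and every step, including the reliance on the convention that single vertices and single edges count as cycles, checks out.
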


We further need the following simple but quite technical lemma, which states that, given a ground set $X$ and a collection $\cF$ of subsets of $X$ of linear size, we can group almost all of these subsets into groups of size $4$ which have a large common intersection. We will apply this lemma when $X$ is the edge-set of a hypergraph $G$ and $\cF$ is a collection of subgraphs of $G$.

\begin{lemma}\label{lem:blocks}
  For every $\e > 0$ there is some $ \delta = \delta (\e) > 0$ and some $C = C(\e)>0$ such that the following is true for every $m \in \N$. Let $X$ be set of size $m$ and $ \cF \subset 2^X$ be a family of subsets such that $\card F \geq \e m$ for every $F \in \cF$.
  Then there is some $ \cG \subset \cF$ of size $\card{\cG} \leq C$ and a partition $\cP$ of $ \cF \setminus \cG$ into sets of size $4$ such that $\card{\bigcap_{i=1}^4 B_i} \geq \delta m$ for every $\{B_1,B_2,B_3,B_4\} \in \cP$.
\end{lemma}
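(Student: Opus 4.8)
The plan is to deduce \cref{lem:blocks} from the following statement, which I will prove by induction on~$j$: \emph{for all $j\in\N$ and $\e>0$ there are $\delta_j(\e)>0$ and $C_j(\e)\in\N$ such that every family of more than $C_j(\e)$ distinct subsets of a finite set $X$, each of size at least $\e\card X$, contains $j$ distinct members whose common intersection has size at least $\delta_j(\e)\card X$.} Granting this for $j=4$, \cref{lem:blocks} is immediate: put $\delta:=\delta_4(\e)$ and $C:=C_4(\e)$, and greedily remove from $\cF$ a quadruple of sets with common intersection of size at least $\delta\card X$ for as long as more than $C$ sets remain, collecting these quadruples into the partition $\cP$ and letting $\cG$ consist of the at most $C$ sets left at the end. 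Since each step removes exactly four sets, $\cP$ is a partition of $\cF\setminus\cG$ into sets of size~$4$.

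For the auxiliary statement, the case $j=1$ is trivial (take $\delta_1(\e)=\e$). Let $j\ge 2$, write $N:=\card\cF$ and $d(x):=\card{\{F\in\cF:x\in F\}}$. From $\sum_x d(x)=\sum_F\card F\ge \e\card X\,N$, convexity and monotonicity of the function $t\mapsto\binom{t}{2}$ give $\sum_x\binom{d(x)}{2}\ge \card X\binom{\e N}{2}$; since $\sum_x\binom{d(x)}{2}=\sum_{\{F,F'\}\subseteq\cF}\card{F\cap F'}$, a short calculation shows that, once $N$ exceeds a constant depending only on $\e$, at least $\tfrac{\e^2}{8}N^2$ pairs $\{F,F'\}$ satisfy $\card{F\cap F'}\ge \tfrac{\e^2}{4}\card X$. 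Hence some $F_1\in\cF$ has a set $\cQ$ of at least $\tfrac{\e^2}{4}N$ \emph{partners}: members $F\ne F_1$ with $\card{F\cap F_1}\ge\tfrac{\e^2}{4}\card X$. Consider the sets $F\cap F_1$ with $F\in\cQ$; these are subsets of $F_1$ of relative size at least $\tfrac{\e^2}{4}$ (as $\card{F_1}\le\card X$). If more than $C_{j-1}(\e^2/4)$ of them are distinct, the induction hypothesis provides $G_1,\dots,G_{j-1}\in\cQ$ with $\card{\bigcap_{i}(G_i\cap F_1)}\ge\delta_{j-1}(\e^2/4)\card{F_1}\ge\e\,\delta_{j-1}(\e^2/4)\card X$, and since each $G_i\cap F_1\subseteq F_1$ the $j$ distinct sets $F_1,G_1,\dots,G_{j-1}$ have this same common intersection. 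Otherwise, some subset $I^*\subseteq F_1$ equals $G\cap F_1$ for at least $\tfrac{\e^2 N}{4\,C_{j-1}(\e^2/4)}$ members $G\in\cQ$, which is at least $j-1$ once $N$ is large enough; for any $j-1$ of them we get $I^*\subseteq F_1\cap G_1\cap\dots\cap G_{j-1}$ with $\card{I^*}\ge\tfrac{\e^2}{4}\card X$. Taking $\delta_j(\e):=\min\{\e\,\delta_{j-1}(\e^2/4),\tfrac{\e^2}{4}\}$ and $C_j(\e)$ equal to the maximum of the finitely many lower bounds on $N$ used above finishes the induction.

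The one genuine subtlety is the distinctness bookkeeping when passing from $\cQ$ to the family $\{F\cap F_1:F\in\cQ\}$ of subsets of $F_1$: two distinct partners may restrict to the same subset of $F_1$, which is exactly why the step splits into the two cases (``many distinct restrictions'', where one recurses, versus ``one restriction repeated often enough'', where one concludes directly, with the stronger bound $\tfrac{\e^2}{4}$). Everything else is routine — the Cauchy--Schwarz double-counting estimate, the fact that $\card{F_1}\ge\e\card X$ converts a guarantee of relative size inside $F_1$ into one of absolute size, and the check that each level of the recursion imposes only finitely many ``$N$ large'' conditions, which can all be folded into $C_j(\e)$ — and the deduction of \cref{lem:blocks} is then immediate.
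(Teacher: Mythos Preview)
Your proof is correct, but it takes a different route from the paper's. The paper argues as follows: define an auxiliary graph on $\cF$ whose edges are the pairs $\{F_1,F_2\}$ with $\card{F_1\cap F_2}\ge(\e/2)^2 m$; a simple volume argument (too many pairwise almost-disjoint sets of size $\e m$ would overflow $X$) shows this graph has independence number at most $2/\e$, and since the vertices missed by a maximum matching form an independent set, one obtains a matching covering all but at most $2/\e$ members of $\cF$. This pairs up the family into twos with large intersections; one then repeats the argument on the family of intersections to turn pairs into fours. This yields the explicit constants $\delta=\e^4/2^6$ and $C=8/\e^2+2/\e$.

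Your approach instead proves a general Ramsey-type statement (any $j$ sets with large common intersection) by an induction that passes to restrictions inside a popular set $F_1$, and then greedily strips off quadruples. This is more flexible (it works for any block size, not just $4$) and the distinctness bookkeeping you flag is handled correctly. The cost is that the recursion $\e\mapsto\e^2/4$ makes your $\delta_j(\e)$ doubly exponential in $j$, whereas the paper's two-step matching argument gives polynomial bounds and is somewhat shorter, being tailored to the case $j=4$ actually needed.
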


 We will prove the lemma with $ \delta(\e) = e^4/2^6$ and $C(\e)=8/\e^2+2/\e$.
\begin{proof}
  Define a graph $G$ on $\cF$ by $\{F_1, F_2\} \in E(G)$ if and only if $\card{F_1 \cap F_2} \geq (\e/2)^2 m$. We claim that $\alpha(G) \leq 2/\e$. Suppose for contradiction that there is an independent set $I$ of size $2/\e+1$. Then we have $|F_0 \setminus \bigcup_{F \in I \setminus \{ F_0 \}} F| \geq \e m/2$ for every $F_0 \in I$  and hence $\left| \bigcup_{F \in I} F \right| > m$, a contradiction.

  Since every graph has a matching of size at least $v(G) - \alpha(G)$, we find a matching $\cP_1$ in $G$ of all but at most $2/\e$ vertices of $G$ (i.e.\ $F \in \cF$). Let $\cG_1= \cF \setminus V(\cP_1)$ and note that $\cP_1$ is a partition of $\cF \setminus \cG_1$ into sets of size $2$. Let $\cF_1=\left\{ F_1 \cap F_2 : \left\{ F_1,F_2 \right\}\in \cP_1 \right\}$ and iterate the process once more.
\end{proof}
\begin{figure}[ht]
	\centering
	\subfloat[Link graphs]{%
		\includegraphics[clip, width=.7\textwidth]{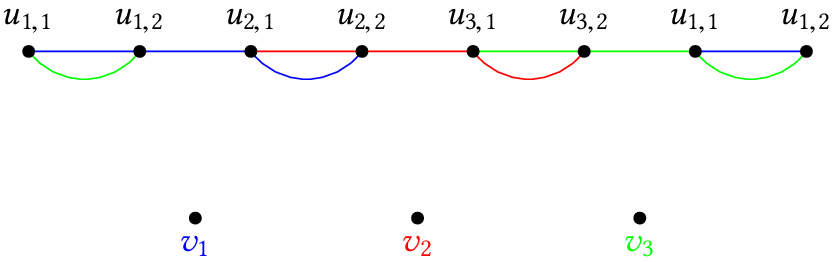}%
	}
	\hfill
	\subfloat[Tight Cycle]{%
		\includegraphics[clip, width=.7\textwidth]{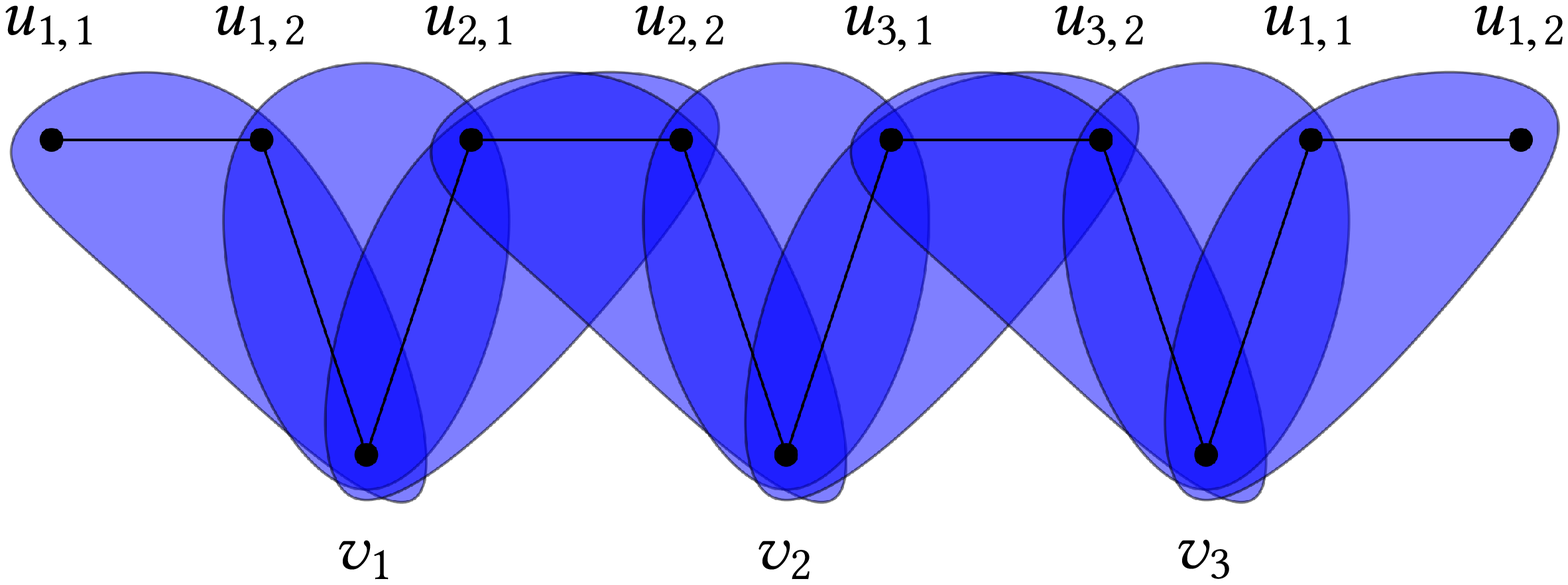}%
	}

	\caption{A sketch of \cref{obs:liftcycle} for $k=t=3$. Figure (a) shows the link graphs of $v_1$ (blue), $v_2$ (red) and $v_3$ (green). The colours are for demonstration purposes only and are not related to the given edge-colouring. Figure (b) shows the resulting tight cycle. In both figures, we identify the ends ($u_{1,1}$ and $u_{1,2}$) to simplify the drawing.}
	\label{fig:liftcycles}
\end{figure}
\begin{proof}[Proof of \cref{lem:keyk}]
By choosing $\gamma$ small enough, we may assume that $|B_1|, \ldots, |B_{k-1}|$ are sufficiently large for the following arguments.
First we claim that it suffices to prove the lemma for $r=1$. Indeed, partition $B_k = B_{k,1} \cup \ldots \cup B_{k,r}$ so that for each $i \in [r]$ and $v \in B_{k,i}$, we have $|\Lk_i(v;B_1, \ldots, B_{k-1})| \geq \e/r \cdot |B_1| \cdots |B_{k-1}|$ and delete all edges containing $v$ whose colour is not $i$. (Here we denote by $\Lk_i(\cdot)$ the link graph with respect to $G_i$, the graph with all edges of colour $i$.)
Next, for each $j \in [k-1]$, partition $B_j = B_{j,1} \cup \ldots \cup B_{j,r}$ into sets of equal sizes so that $|\Lk_i(v;B_{1,i}, \ldots, B_{k-1,i})| \geq \e/(2r) \cdot |B_{1,i}| \cdots |B_{k-1,i}|$. Such a partition can be found for example by choosing one uniformly at random and applying the probabilistic method). Finally, we can apply the one-colour result (with $\e' = \e/(2r)$) for each $i \in [r]$.

Fix $\e>0$, $k \geq 2$ and a $k$-partite $k$-graph $H$ with parts $B_1, \ldots, B_k$ as in the statement of the lemma.
Choose constants $\gamma, \delta_1, \delta_2, \delta_3 >0$ so that $0 < \gamma \ll \delta_3 \ll \delta_2 \ll \delta_1 \ll \e, 1/k $.
We begin with a simple but important observation.
\begin{obs}\label{obs:liftcycle}
Let $v_1, \ldots, v_t \in B_k$ be distinct vertices and $\cC$ be a tight cycle in $K^{(k-1)}\left( B_1, \ldots, B_{k-1} \right)$ with vertex-sequence $(u_{1,1}, \ldots, u_{1,k-1}, \ldots, u_{t,1}, \ldots, u_{t,k-1})$. Denote by $e_{s,i}$ the edge in $C$ starting at $u_{s,i}$ and suppose that
\begin{enumerate}[(i)]
  \item $e_{s,i} \in \Lk\left( v_s; B_1, \ldots, B_{k-1} \right)$ for every $s \in [t]$ and every $i \in [k-1]$ and
  \item $e_{s,1} \in \Lk\left( v_{s-1}; B_1, \ldots, B_{k-1} \right)$ for every $s \in [t]$ (here $v_0 := v_t$).
\end{enumerate}
Then, $(u_{1,1}, \ldots, u_{1,k-1}, v_1, \ldots, u_{t,1}, \ldots, u_{t,k-1}, v_t)$ is the vertex-sequence of a tight cycle in $H$.
\end{obs}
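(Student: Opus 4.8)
The plan is to verify the statement directly from the definition of a tight cycle: it suffices to check that every set of $k$ consecutive vertices in the proposed cyclic sequence is an edge of $H$. First I would fix notation, writing the proposed sequence cyclically as $w_1, \ldots, w_{tk}$, where for each $s \in [t]$ the block of positions $(s-1)k+1, \ldots, sk$ reads $u_{s,1}, \ldots, u_{s,k-1}, v_s$. This is a legitimate cyclic vertex ordering: the $u_{s,i}$ are the (distinct) vertices of $\cC$, the $v_s$ are distinct by hypothesis, and the two families lie in the disjoint parts $B_1 \cup \cdots \cup B_{k-1}$ and $B_k$ respectively, so there are no repetitions; the resulting cycle has length $tk$.

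Next I would split into cases according to how a window of $k$ consecutive vertices straddles the blocks, i.e.\ according to the residue modulo $k$ of its starting position $a = (s-1)k+j$ with $j \in [k]$. If $1 \le j \le k-1$, the window is $u_{s,j}, \ldots, u_{s,k-1}, v_s, u_{s+1,1}, \ldots, u_{s+1,j-1}$ (block indices read modulo $t$), and the $k-1$ vertices other than $v_s$ are exactly the $k-1$ consecutive vertices of $\cC$ starting at $u_{s,j}$, namely the edge $e_{s,j}$; thus the window equals $e_{s,j} \cup \{v_s\}$, which lies in $E(H)$ by hypothesis~(i). If $j = k$, the window is $v_s, u_{s+1,1}, \ldots, u_{s+1,k-1} = e_{s+1,1} \cup \{v_s\}$, which lies in $E(H)$ by hypothesis~(ii) applied with index $s+1$ (so that $v_{(s+1)-1} = v_s$; the convention $v_0 := v_t$ handles the wrap-around $s = t$). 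Since these two cases exhaust every window, the proof concludes.

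I do not expect a genuine mathematical obstacle here: the only thing to be careful about is the indexing and orientation bookkeeping — reading ``the edge of $\cC$ starting at $u_{s,i}$'' with the correct cyclic orientation so that $e_{s,j} = \{u_{s,j}, \ldots, u_{s,k-1}, u_{s+1,1}, \ldots, u_{s+1,j-1}\}$ (the vertices wrap from block $s$ into block $s+1$ inside $\cC$), and checking that the block wrap-around $s = t \mapsto s = 1$ is consistent with the conventions $v_0 := v_t$ and $t+1 \equiv 1$. A picture like \cref{fig:liftcycles} makes all of this transparent.
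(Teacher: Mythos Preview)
Your proposal is correct and matches the paper's approach: the paper does not give a detailed argument either, merely noting that the observation ``follows readily from the definition of the link graphs'' and pointing to \cref{fig:liftcycles}. Your case split on the position of the window modulo $k$ is exactly the direct verification the paper has in mind, and your bookkeeping of $e_{s,j}$ and the wrap-around conventions is accurate.
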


The proof of \cref{obs:liftcycle} follows readily from the definition of the link graphs. See \cref{fig:liftcycles} for an overview. We will now proceed in three steps. For simplicity, we write $H_v := \Lk_{H}(v;B_1, \ldots, B_{k-1})$ for $v \in B_k$.

\begin{step}{1 (Divide into blocks)}
By \cref{lem:blocks}, there is some $C=C(\e) \in \mathbb N$ and a partition $\cP$ of all but $C$ graphs from $\{H_v : v \in B_k\}$ into \emph{blocks} $\cH$ of size $4$ with $e(\cH) := |\bigcap_{H \in \cH} E(H)| \geq \delta_1 |B_1| \cdots |B_{k-1}|$ for every $\cH \in \cP$. Remove the $C$ leftover vertices from $B_k$.

Think of every block $\cH$ now as a $(k-1)$-graph with edges $E(\cH) := \bigcap_{H \in \cH} E(H)$. By applying \cref{lem:shortpaths} (with $k-1$ instead of $k$), for each $\cH \in \cP$, we find a subgraph $\cH' \subset \cH$ such that
$e(\cH') \geq \delta_2 \card{B_1} \cdots \card{B_{k-1}}$ with the same property as in \cref{lem:shortpaths}. By deleting all the edges of $ \cH \setminus \cH'$ we may assume that $\cH$ itself has this property.
\end{step}

\begin{step}{2 (Cover blocks by paths)}
Define an auxiliary graph $G$ with $V(G) = \cP$ and $\{\cH_{1},\cH_{2}\} \in E(G)$ if and only if $e(\cH_{1} \cap \cH_{2}) \geq \delta_3 |B_1|\cdots |B_{k-1}|$. Similarly as in the proof of \cref{lem:blocks}, we conclude that $\alpha(G) \leq 2/\delta_2$, and hence $V(G)$ can be covered by $2 / \delta_2$ vertex-disjoint paths using \cref{thm:posa}.
\end{step}

\begin{figure}[ht]
	\centering
	\subfloat[A path of blocks.]{%
		\includegraphics[clip,width=.95\textwidth]{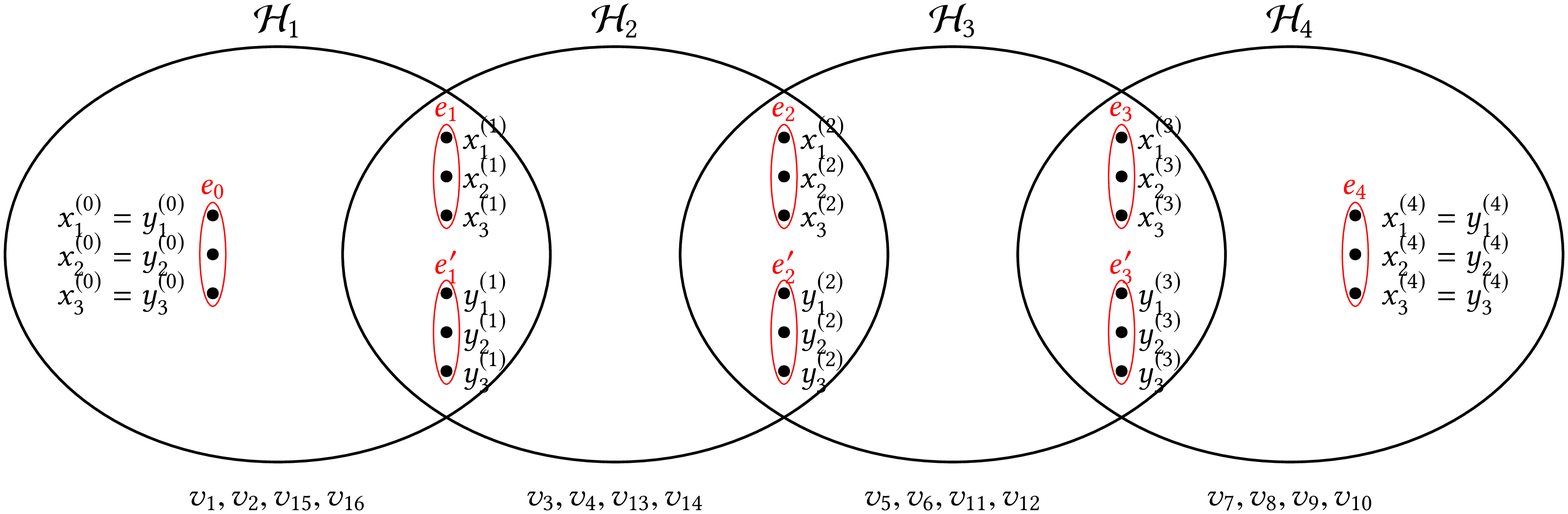}
	}%
	\hfill
	\subfloat[Edge sequence of the auxiliary $3$-uniform tight cycle.]{%
		\includegraphics[clip,width=.92\textwidth]{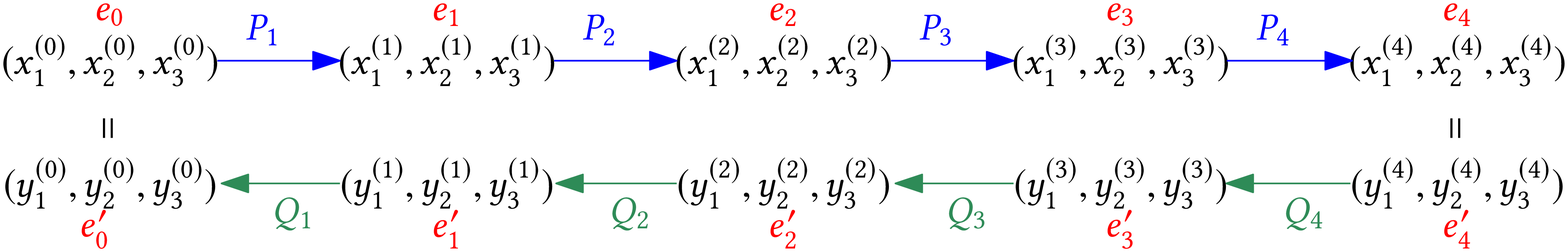}
	}%
	\hfill
	\subfloat[Vertex sequence of the resulting tight cycle.]{%
		\includegraphics[clip,width=0.95\textwidth]{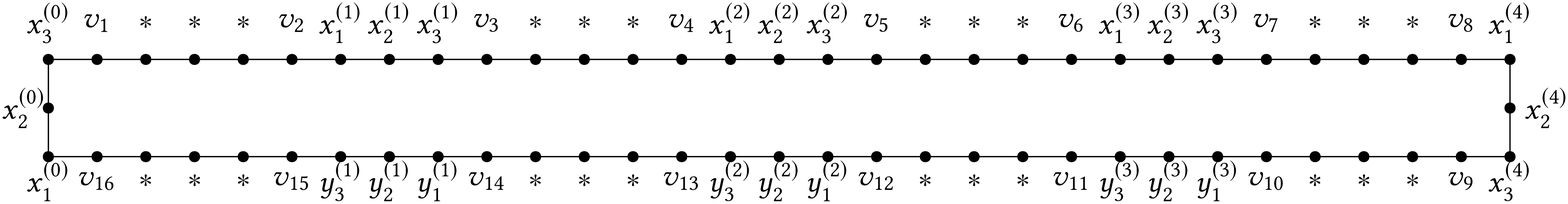}
	}%
	\caption{Finding a tight cycle in a path of blocks when $k=t=4$. In Figure (c), $*$ represents an internal vertex of a path $P_i$ or $Q_i$. 
	}
	\label{fig:blockcycles}
\end{figure}

\begin{step}{3 (Lift to tight cycles)}
  This step is the crucial part of the argument. To make it easier to follow the proof, \cref{fig:blockcycles} provides an example for $k=t=4$.

  We will find in each path of blocks an auxiliary tight cycle in $ K^{(k-1)}(B_1, \ldots, B_{k-1}) $ of the desired form to apply \cref{obs:liftcycle}. Let $\bP= (\cH_1, \ldots, \cH_{t})$ be one of the paths. Choose disjoint edges $e_0 = \left\{ x_1^{(0)}, \ldots, x_{k-1}^{(0)} \right\} \in E(\cH_1)$ and $e_t = \left\{ x_1^{(t)}, \ldots, x_{k-1}^{(t)} \right\} \in E(\cH_{t})$. For each $s \in [t - 1]$, further choose two edges $e_s = \left\{ x_1^{(s)}, \ldots, x_{k-1}^{(s)} \right\} \in E(\cH_s) \cap E(\cH_{s+1})$ and $e'_s = \left\{ y_1^{(s)}, \ldots, y_{k-1}^{(s)} \right\} \in E(\cH_s) \cap E(\cH_{s+1})$ so that all chosen edges are pairwise disjoint. We identify $x_i^{(0)} = y_i^{(0)}$ and $x_i^{(s)} = y_i^{(s)}$ for every $i \in [k-1]$, and $e_0=e'_0$ and $e_{t}=e'_{t}$. Assume without loss of generality, that $ x_i^{(s)} \in B_i $ for every $ i \in [k-1]$ and all $s = 0, \ldots, t$.

  By construction, every block $\cH$ has the property guaranteed in \cref{lem:shortpaths}. Therefore, for every $s \in [t]$, there is a tight path $P_s \subset \cH_s$ of length $2k-3$ which starts at $(x_2^{(s-1)}, \ldots, x_{k-1}^{(s-1)})$, ends at $(x_1^{(s)}, \ldots, x_{k-2}^{(s)})$ and (internally) avoids all previously used vertices.\footnote{Note that the number of previously used vertices in $V_j$ is at most $\gamma|V_j|$ for every $j \in [k-1]$ since every tight cycle in $G$ uses the same number of vertices from each part.}
  Similarly, there is for every $s \in [t]$ a tight path $Q_s \subset \cH_s$ of length $2k-3$ which starts at and $(y_1^{(s)}, \ldots, y_{k-2}^{(s)})$, ends at $(y_2^{(s-1)}, \ldots, y_{k-1}^{(s-1)})$ and (internally) avoids all previously used vertices.

  Let $U \subset B_k$ be the set of vertices $v$ for which $H_v \in \cH_i$ for some $i \in [t]$. To finish the proof, we want to apply \cref{obs:liftcycle} to cover $U$.
  Label $U = \{v_1, \ldots, v_{4t}\}$ so that $H_{v_{2i+1}}, H_{v_{2i+2}}, H_{v_{4t-2i}}, H_{v_{4t-2i -1}}  \in \cH_i$ for all $i = 0, \ldots, t-1$.
  Consider now the tight cycle $\cC$ in $K^{k-1}\left( B_1, \ldots, B_{k-1} \right)$ with edge sequence
  \begin{equation}\label{eq:edge-seq}
  e'_0=e_0, P_1, e_1, P_2,e_2, \ldots, P_t, e_t = e'_t, Q_t, \ldots,e_1, Q_1, e'_0=e_0
  \end{equation}
  and relabel $V(\cC)$ so that it's vertex sequence is $(u_{1,1}, \ldots, u_{1,k-1}, \ldots, u_{t,1}, \ldots, u_{4t,k-1})$
  (i.e.\ $u_{1,i} = x_i^{(0)}$ for $i \in [k-1]$, $u_{2,1}, \ldots, u_{2,k-1}$ are the internal vertices of $P_1$\footnote{Note that all $P_i$ and $Q_i$ have $3k-5$ vertices and hence $k-1$ internal vertices.}, $u_{3,i} = x_i^{(1)}$ for all $i \in [3]$ and so on).
  By construction, $\cC$ has the desired properties to apply \cref{obs:liftcycle}, finishing the proof. Note that it is important here that every block $\cH_i$ has size $4$ since we cover $2$ vertices of every block ``going forwards'' and $2$ vertices ``going backwards''.
\end{step}
\vspace{-0.75cm}
\end{proof}

\subsection{Proof \texorpdfstring{of \cref{thm:main}}{the main theorem}.}\label{sec:proof}

  Fix $\alpha,r,n \in \N$ and a $k$-graph $G$ with $\alpha(G) \leq \alpha$.
  Choose constants $ 0 < \beta, \gamma, \e \ll \max\{\alpha,r,k\}^{-1}$ so that $\gamma = \gamma(r,\e)$ works for \cref{lem:keyk} and $\beta = \beta(\alpha,r)$ works for \cref{lem:crownk}. The proof proceeds in $\alpha$ steps (the initial $k-1$ steps are done at the same time). No effort is made to calculate the exact number of cycles we use, we only care that it is bounded (i.e.\ independent of $n$).

\begin{step}{1, \ldots, k-1}
  By \cref{lem:crownk}, there is some $B \subset [n]$ of size $\beta n$ with an absorber $A_{k-1} \subset [n]$. Partition $B$ into $k-1$ sets $B_1^{(k-1)},\ldots, B_{k-1}^{(k-1)}$ of equal sizes.
  By \cref{lem:greedy}, there is a bounded number of vertex-disjoint monochromatic tight cycles in $[n] \setminus (B \cup A_{k-1})$ so that the set $R_{k-1}$ of uncovered vertices in $[n] \setminus (B \cup A_{k-1})$ satisfies $|R_{k-1}| \leq \gamma |B_1^{(k-1)}|$.
  Let $R'_{k-1} \subset R_{k-1}$ be the set of vertices $v$ with $|\Lk(v;B_1^{(k-1)}, \ldots, B_{k-1}^{(k-1)})| < \e |B_1^{(k-1)}| \cdots |B_{k-1}^{(k-1)}|$ and let $R_{k-1}'' = R_{k-1} \setminus R_{k-1}'$.
  By \cref{lem:keyk} we can find a bounded number of vertex-disjoint cycles in $B_1^{(k-1)} \cup \ldots \cup B_{k-1}^{(k-1)} \cup R_{k-1}''$ covering $R_{k-1}''$. Remove them and let $B_i^{(k)} \subset B_i^{(k-1)}$ be the set of leftover vertices for every $ i \in [k-1]$.
	\end{step}

  \begin{step}{j $(j = k, \ldots, \alpha)$} In each step $j$, we will define disjoint sets $B_1^{(j+1)}, \ldots, B_j^{(j+1)}, R_{j+1}',A_{j}$. Fix some $j \in \{k, \ldots, \alpha\}$ now and suppose we have built disjoint sets $B_1^{(j)}, \ldots, B_{j-1}^{(j)}, R_j'$ and absorbers $A_2, \ldots, A_{j-1}$.  By \cref{lem:crownk} there is some $B_j^{(j)} \subset R_j'$ of size $\beta |R_j'|$ with an absorber $A_j \subset R_j'$. By \cref{lem:greedy}, there is a bounded number of monochromatic tight cycles in $R_j' \setminus (A_j \cup B_j^{(j)})$ so that the set $R_{j+1}$ of uncovered vertices in $R_j' \setminus (A_j \cup B_j^{(j)})$ satisfies $|R_{j+1}| \leq \gamma |B_j^{(j)}|$. Let $R'_{j+1} \subset R_{j+1}$ be the set of vertices $v$ with $|\Lk(v;B_{t_1}^{(j)},\ldots, B_{t_{k-1}}^{(j)})| < \e \card{B_{t_1}^{(j)}} \cdots \card{B_{t_{k-1}}^{(j)}}$ for all $1 \leq t_1 < \ldots < t_{k-1} \leq j$ and let $R_{j+1}'' = R_{j+1} \setminus R_{j+1}'$. By ($\binom{j}{k}$ applications of) \cref{lem:keyk} we can find a bounded number of vertex-disjoint cycles in $B_1^{(j)} \cup \ldots \cup B_{j}^{(j)} \cup R_j''$ covering $R_j''$. Remove them and let $B_i^{(j+1)} \subset B_i^{(j)}$ be the set of leftover vertices for every $ i \in [j]$.
	\end{step}

  In the end we are left with disjoint sets $ B_1:=B_1^{(\alpha+1)}, \ldots,  B_\alpha:=B_\alpha^{(\alpha+1)},  B_{\alpha+1}:=R_{\alpha+1}' $ and corresponding absorbers $A_{k-1}, \ldots, A_\alpha$ ($A_{k-1}$ absorbs $B_1^{(\alpha+1)}, \ldots, B_{k-1}^{(\alpha+ 1)}$). All other vertices are covered by a bounded number of cycles.

  We will show now that	$R_{\alpha+1}' = \emptyset$, which finishes the proof. In order to do so, we assume the contrary and find an independent set of size $\alpha+1$.
  Note that every vertex in $B_j^{(j)} \setminus B_j$ must be part of a tight cycle of our disjoint collection of tight cycles with one part in $R_{j+1}$ and hence $\card{B_j^{(j)} \setminus B_j} \leq \card{R_{j+1}} \leq \gamma \card{B_j^{(j)}}$. It follows that $\card{B_j} \geq (1- \gamma) \card{B_j^{(i)}}$ for every $1 \leq j \leq i \leq \alpha$ and we conclude
  \begin{align*}
    \Lk\left( v;B_{i_1},\ldots, B_{i_{k-1}} \right) &\leq \Lk\left( v;B_{i_1}^{(i-1)},\ldots, B_{i_{k-1}}^{(i-1)} \right) \\
    &\leq \e \card{B_{i_1}^{(i-1)}} \cdots \card{B_{i_{k-1}}^{(i-1)}}\\
    &\leq \e (1-\gamma)^{-(k-1)}\card{B_{i_1}} \cdots \card{B_{i_{k-1}}}\\
    &\leq 2\e \card{B_{i_1}} \cdots \card{B_{i_{k-1}}}
  \end{align*}
  for every $i \in \left\{ k, \ldots, \alpha+1 \right\}$, $1 \leq i_1 < \ldots < i_{k-1} < i$ and $ v \in B_i$.
  By the following lemma, there is an independent set of size $\alpha+1$, a contradiction.\qed

\begin{lemma}
  For all $k,m \in \N$ there is some $ \e = \e(k,m)>0$ such that the following is true for every $k$-graph $ H $ and all non-empty, disjoint sets $B_1, \ldots,B_m \subset V(H)$. If $\card{\Lk(v;B_{i_1}, \ldots, B_{i_{k-1}})} \leq \e \card{B_{i_1}} \cdots \card{B_{i_{k-1}}}$ for all $i \in \left\{ k, \ldots, m \right\}$, $1 \leq i_1 < \ldots < i_{k-1} < i$ and $ v \in B_i$, then there is an independent transversal, i.e.\ an independent set $\left\{ v_1, \ldots, v_m \right\}$ with $v_i \in B_i$ for all $i \in [m]$.
  \label{lem:indtrans}
\end{lemma}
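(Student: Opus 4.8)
The plan is to prove \cref{lem:indtrans} by a one-line first-moment argument: choose the transversal at random and show that, for $\e$ small enough in terms of $k$ and $m$, the expected number of edges of $H$ lying inside the chosen set is strictly less than $1$.

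Concretely, I would set $\e(k,m) := \tfrac12\binom{m}{k}^{-1}$ when $m \ge k$, and $\e(k,m):=1$ when $m < k$; in the latter case the statement is trivial, since $H$ has no edge contained in a set of fewer than $k$ vertices and hence \emph{every} transversal is independent. So assume $m \ge k$. Pick $v_i \in B_i$ independently and uniformly at random for each $i \in [m]$. Because $B_1,\ldots,B_m$ are pairwise disjoint and non-empty, $\{v_1,\ldots,v_m\}$ is automatically a transversal with $m$ distinct elements, so it only remains to show that with positive probability it contains no edge of $H$.

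The key computation is to bound, for each fixed tuple $1 \le i_1 < \cdots < i_k \le m$, the number $N(i_1,\ldots,i_k)$ of edges $e \in E(H)$ with $\card{e \cap B_{i_j}} = 1$ for every $j \in [k]$. Writing such an edge as $\{w_1,\ldots,w_k\}$ with $w_j \in B_{i_j}$ and summing over the choice of the vertex $w_k \in B_{i_k}$, the hypothesis applied with $i = i_k$ (note $i_k \ge k$ and $i_1 < \cdots < i_{k-1} < i_k$) gives $N(i_1,\ldots,i_k) = \sum_{w \in B_{i_k}} \card{\Lk(w;B_{i_1},\ldots,B_{i_{k-1}})} \le \e\,\card{B_{i_1}}\cdots\card{B_{i_k}}$, and hence $\Pro{\{v_{i_1},\ldots,v_{i_k}\}\in E(H)} = N(i_1,\ldots,i_k)\big/\big(\card{B_{i_1}}\cdots\card{B_{i_k}}\big) \le \e$. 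Since every edge of $H$ contained in $\{v_1,\ldots,v_m\}$ meets exactly $k$ of the parts $B_i$ (again by disjointness), its index tuple is one of the $\binom{m}{k}$ tuples above, so a union bound yields $\Pro{\{v_1,\ldots,v_m\}\text{ contains an edge}} \le \binom{m}{k}\e < 1$. Thus a random transversal is an independent transversal with positive probability, proving the lemma.

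I do not expect a genuine obstacle here: the whole proof is the union bound above. The only points needing a little care are the degenerate range $m<k$ (declared trivial) and the observation that the stated link condition controls precisely the edges one needs — namely, that it suffices to bound $\card{\Lk(w;B_{i_1},\ldots,B_{i_{k-1}})}$ for $w$ in the \emph{last} part $B_{i_k}$ of the tuple, which is exactly what summing over index tuples requires. A deterministic greedy variant (choosing $v_1,\ldots,v_m$ one at a time while maintaining the invariant that each unused part still has many ``safe'' vertices) also works but involves more bookkeeping, so I would present the probabilistic version.
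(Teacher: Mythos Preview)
Your proof is correct, and it is genuinely different from the paper's. The paper carries out exactly the deterministic greedy variant you allude to at the end: it sets $\delta = m^{-(k-1)}$ and $\e = \delta^{k-1} = m^{-(k-1)^2}$, picks $v_m \in B_m$ arbitrarily, and then chooses $v_{m-1}, v_{m-2}, \ldots, v_1$ in turn, at each step avoiding the union of certain ``bad'' sets $\overline{B}_j(s,\mathbf i)$ (vertices whose link, together with the already-chosen $v_i$'s, would be too dense on the remaining parts). Each bad set has size below $\delta|B_j|$ and there are fewer than $\binom{m-1}{k-1} < 1/\delta$ of them, so a safe choice always exists.

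Your first-moment argument is shorter, avoids the bookkeeping of the layered bad sets, and even yields a better constant ($\e \sim m^{-k}$ rather than $m^{-(k-1)^2}$). The paper's approach, on the other hand, is fully constructive. Both hinge on the same observation you isolate: the hypothesis bounds $\card{\Lk(w;B_{i_1},\ldots,B_{i_{k-1}})}$ for $w$ in the \emph{top} part $B_{i_k}$ of any $k$-tuple, which is precisely what is needed whether one sums over $w \in B_{i_k}$ (your version) or descends greedily from $B_m$ (the paper's).
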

We will prove the lemma for $\e(k,m) = m^{-(k-1)^{2}}$.
\begin{proof}
  Let $\delta=m^{-(k-1)}$ and $\e = \delta^{k-1}$. Choose $v_m \in B_m$ arbitrarily and assume now that $v_m, \ldots, v_{j+1}$ are chosen for some $j \in [m-1]$.
  Given $s \in \left\{ 2, \ldots, k-1 \right\}$ and $\mathbf i = (i_1, \ldots, i_k)$ with $1 \leq i_1 < \ldots <i_{s-1} < i_s = j < i_{s+1} < \ldots < i_k \leq m$, define
  \[\overline B_j(s,\mathbf i) := \left\{ u \in B_j : \card{\Lk \left( v_{i_k}, \ldots, v_{i_{s+1}},u;B_{i_{s-1}}, \ldots, B_{i_1} \right)} \geq \e/\delta^{k-s}\card{B_{i_1}} \cdots \card{B_{i_{s-1}}} \right\}.\]
  Furthermore, given $ \mathbf i = (i_1, \ldots, i_k)$ with $j = i_1 < i_2 < \ldots < i_k \leq m$, define
  \[\overline B_j(1,\mathbf i):=\mathrm{N}\left(v_{i_k}, \ldots, v_{i_2};B_{i_1}\right),\]
the neighbourhood of $\{v_{i_2},\ldots,v_{i_k}\}$ in $B_{i_1}$.
  Note that, by choice of $v_{m}, \ldots, v_{j+1}$, we have $\card{\overline B_j(s,\mathbf i)} < \delta \card{B_j}$ for every $s \in \left\{ 2, \ldots, k-1 \right\}$ and $ \card{\overline B_j(1, \mathbf i)} < \e/\delta^{k-2} \card{B_j} = \delta \card{B_j}$.
  Since there are at most $\binom{m-1}{k-1} < 1/\delta$ choices for $(s,\mathbf i)$, we can choose some $v_j \in B_j \setminus \bigcup_{s,\mathbf i} \overline B_j(s,\mathbf i)$. Clearly, at the end of this process, $\left\{ v_1, \ldots, v_m \right\}$ will be independent.
\end{proof}

\providecommand{\bysame}{\leavevmode\hbox to3em{\hrulefill}\thinspace}
\providecommand{\MR}{\relax\ifhmode\unskip\space\fi MR }
\providecommand{\MRhref}[2]{%
	\href{http://www.ams.org/mathscinet-getitem?mr=#1}{#2}
}
\providecommand{\href}[2]{#2}

\end{document}